\newtheorem{theorem}{Theorem}
\newtheorem{lemma}[theorem]{Lemma}
\newtheorem{proposition}[theorem]{Proposition}
\newtheorem{remark}[theorem]{Remark}
\newtheorem{assumption}[theorem]{Assumption}
\newenvironment{proof}[1][Proof]{\textbf{#1.} }{\ \rule{0.5em}{0.5em}}
\newcommand{\bh}{\mathbf{H}}
\begin{document}

\author{Pelin G. Geredeli \\
School of Mathematical and Statistical Sciences,\\ Clemson University, Clemson-SC 29634, USA \\ pgerede@clemson.edu }
\title{Spectral Analysis and Asymptotic Decay of the Solutions to Multilayered Structure-Stokes Fluid Interaction PDE System
}
\maketitle

\begin{abstract}

In this work, the dynamics of a multilayered structure-fluid interaction (FSI) PDE system is considered. Here, the coupling of 3D Stokes and 3D elastic dynamics is realized via an additional 2D elastic equation on the boundary interface. Such modeling PDE systems appear in the mathematical modeling of eukaryotic cells and vascular blood flow in mammalian arteries.  We analyze the long time behavior of solutions to such FSI coupled system in the sense of strong stability. 

Our proof is based on an analysis of the spectrum of the associated semigroup generator $\mathbf{A}$ which in particular entails the elimination of all three parts of the spectrum of $\mathbf{A}$ from the imaginary axis. In order to avoid steady states in our stability analysis, we firstly show that zero is an eigenvalue for the operator $\mathbf{A}$, and we provide a characterization of the (one dimensional) zero eigenspace $Null(\mathbf{A}).$ In turn, we address the issue of asymptotic decay of the solution to the zero state for any initial data taken from the orthogonal complement of the zero eigenspace $Null(\mathbf{A})^{\bot}.$

\vskip.3cm \noindent \textbf{Key terms:} Fluid-Structure Interaction, Stokes Flow, Stability  

\vskip.3cm \noindent \textbf{Mathematics Subject Classification (MSC): 35B35, 35B40, 35G35}

\end{abstract}

\section{Introduction}

\hspace{0.6cm}The multilayered (composite) structure-Stokes flow interaction PDE model which we consider in this manuscript arises in the context of the blood transportation process in mammalian arteries and shape deformation of cells in cellular dynamics. The physiological interaction between arterial walls (or the membranes of the eukaryotic cells) and blood flow (or the surrounding fluids) plays a crucial role in the physiology and pathophysiology of the human cardiovascular system and cells \cite{ap-1, FSIforBIO, hsu, BorisSimplifiedFSI, SunBorMulti}, and can be mathematically realized by fluid structure interaction (FSI) PDE. In particular, all these dynamics are governed via multi-physics PDE models.

In the existing literature of FSI dynamics, it is often the case that only a single layer structural PDE component is present in the model; i.e., the displacement along the interaction interface is not modeled via any elastic equation. For example, coupled heat-wave PDE systems (and some variations) have been considered in the past where the heat equation component is regarded as a simplification of the fluid flow component of the fluid-structure interaction (FSI) dynamics, and the wave equation is regarded as a simplification of the structural (elastic) component; see e.g., \cite[Section 9]{lions1969quelques} and \cite{RauchZhangZuazua}. Also, the FSI dynamics in which the fluid PDE component of fluid-structure interactions is governed by Stokes or Navier-Stokes flow were studied in various settings \cite{av-trig, AvalosTriggiani09, Barbu, Chambolle, Courtand, Du, KochZauzua, lions1969quelques}.  However, some degree of physical realism is lost if an arterial wall is taken to have no composite layers. Because the vascular wall structures are typically not single-layered; they are in fact of multi-layered type (\cite{multi-layered, buk, SunBorMulti}).

The first contribution to the analysis of such multilayered FSI dynamics is the pioneering paper \cite{SunBorMulti}, where multilayered FSI is composed of 2D (thick layer) wave equation and 1D wave equation (thin layer) coupled to a 2D fluid PDE across a boundary interface. In  \cite{SunBorMulti}, the authors proved the wellposedness of the said multilayered coupled system using a partitioned, loosely coupled scheme. This work represented a crucial result for future studies, since it showed that the presence of a thin structure with mass at the fluid–structure interface indeed regularizes the FSI dynamics. Later on,  the authors in \cite{AGM}, considered a multilayered ``canonical" 3D heat-2D wave-3D wave coupled system with the objective of analyzing the existence-uniqueness and the asymptotic behavior of the corresponding solutions. In \cite{AGM}, although the considered multilayered heat-wave-wave system constitutes a simplification somewhat of the FSI model in \cite{SunBorMulti}– in particular, the 2D wave equation takes the place of a fourth order plate or shell PDE – our results in \cite{AGM} still remained valid if one replaces the 2D wave equation with the corresponding linear fourth order equation. With an objective of better understanding the regularizing effects of the underlying fluid dissipation and the coupling effects at the boundary interface in higher dimension, the authors of \cite{AGM} analyzed the wellposedness and stability properties of the said multilayered FSI PDE system. In \cite{AGM}, to obtain their results, the authors appealed to the semigroup approach, and the spectral theory of linear operators.

Subsequently, in \cite{AA}, a multilayered Lam$\acute{e}$-heat PDE system was considered, and the authors gave an alternate proof for the analogous strong stability result which they obtained in \cite{AGM}. The main reasoning to pursue a different approach then that in \cite{AGM} was to eschew the need for deriving analogous energy identities for the thick Lam$\acute{e}$ solution component of the multilayered structure-fluid interaction PDE system. To this end, the strong stability proof in \cite{AA} was predicated upon ultimately invoking the resolvent criterion introduced by Y. Tomilov \cite{tomilov}.

The strong stability result obtained in \cite{AGM} for the solution to the multilayered ``canonical" 3D heat - 2D wave - 3D wave coupled system has been improved in \cite{RD}, where the authors proved that  this solution actually satisfies a rational decay rate with respect to smooth initial data.

Very recently, in \cite{Ger}, a more realistic multilayered FSI PDE model was analyzed for which the canonical heat component was replaced with the Stokes flow and coupling of the 3D Stokes flow and 3D elastic (structural) PDE components is realized, via an additional 2D elastic system on the boundary interface. The main result of \cite{Ger} was the semigroup wellposedness of the coupled Stokes-wave-Lam$\acute{e}$ PDE model. Unlike the papers mentioned above, dealing with the pressure term (due to the Stokes fluid) was one of the main challenges–not seen in the aforecited works– in determining the existence and uniqueness of solutions, and this term indeed required a nonstandard elimination of the pressure term for semigroup generation of the corresponding dynamical system. This difficulty was overcome by expressing the fluid solution variable via a decoupling of the Stokes equation, and constructing the elastic solution variables by solving a mixed variational formulation via a Babuska-Brezzi approach.

In the current manuscript, our main goal is to analyze the long time behavior of the solution to the 3D Stokes flow-2D elastic-3D elastic coupled PDE system whose existence-uniqueness was established in \cite{Ger}. In this regard, we show that the associated semigroup is strongly stable. That is, there is asymptotic decay to the zero state. \textit{To the best of the author's knowledge there is no result about the strong stability of such multilayered structure-Stokes fluid interaction PDE systems with an elastic interface.} 

At this point, we should point out the main challenges encountered throughout the manuscript and novelties:\\

\textbf{(i)} \textbf{Spectral Analysis for the generator $\mathbf{A}:$} In order to derive our stability result, we adopt a frequency domain approach. Since the domain $D(\mathbf{A})$ of the associated $C_0-$semigroup generator $\mathbf{A}$ (see \ref{gener}) is not compactly embedded into finite energy space $\mathbf{H}$ (see \ref{H} below), classical weak stabilizability approaches are not availing; see \cite{bench}. Instead, we appeal to well known strong stability criterion introduced by Arend-Batty \cite{A-B}. However, the spectral analysis required for the generator $\mathbf{A}$ entails a great amount of technicalities. For example, one needs to analyze all parts of the spectrum $\sigma(\mathbf{A}),$ and show that it does not intersect the imaginary axis. \\

\textbf{(ii)} \textbf{Zero eigenvalue for the generator $\mathbf{A}:$} Since there is a need to avoid steady states in our stability analysis, as a prerequisite result, we show that zero is an eigenvalue for the generator $\mathbf{A}$ of the 3D Stokes flow-2D elastic-3D elastic coupled PDE system. This gives us an alert that the presence of the zero eigenvalue disallows the consideration of any sort of decay problem in the entire phase space $\mathbf{H}$. Accordingly, we are led here to consider our strong stability analysis on the orthogonal complement $\mathbf{N}^{\bot}$ of the zero eigenspace $\mathbf{N}$ that is defined in \eqref{comp} below. In this regard, we provide clean characterization for the zero eigenspace $\mathbf{N},$ which is only one dimensional, and the orthogonal complement $\mathbf{N}^{\bot}-$ultimately, the Hilbert space of stabilizable initial data.\\

\textbf{(iii)} \textbf{Overdetermined eigenvalue problem for the thick 3D-elastic component:} As we mentioned in (i), our strong stability proof entails the elimination of all three parts of the spectrum of the generator $\mathbf{A}$ from the imaginary axis. Accordingly, we analyze $\sigma_p(\mathbf{A})$ (point spectrum), $\sigma_r(\mathbf{A})$ (residual spectrum), and $\sigma_c(\mathbf{A})$ (continuous spectrum). For this, we are essentially interested in static equations with respect to the operators $\mathbf{A}$ and $(i\beta I-\mathbf{A})$ for some $\beta \neq 0.$ In order to solve these resolvent equations, we apply certain multipliers to obtain necessary estimates to solve the solution variables. However, getting an estimate for the 3D-thick elastic solution variable $\{w_{0n}\}$ complicates our analysis. In particular, we see that the static equation obtained for $\{w_{0n}\}$ represents an overdetermined eigenvalue problem. In order to solve such a problem, we are forced to impose an assumption (see Assumption \ref{assump}) to guarantee that the spectrum of the associated generator $\mathbf{A}$ remains away from the imaginary axis. Such (essentially geometric) assumptions have been seen in the context of stabilization problems for other FSI. See e.g., \cite{Av, GR, av-trig}. 

\newpage

\noindent \textbf{Description of the Multilayered FSI PDE Model}
\vspace{0.5cm}

\noindent We describe the mathematical modeling of the multilayered (composite) structure-Stokes fluid interaction PDE system on the following geometry: the fluid geometry $\Omega _{f}$ $\subseteq \mathbb{R}^{3}$ is a
bounded domain of class $C^3$ with exterior boundary $\Gamma _{f}$. The
structure domain $\Omega _{s}$ $\subseteq \mathbb{R}^{3}$ is
\textquotedblleft completely immersed\textquotedblright\ in $\Omega _{f}$ (See figure below) 

\begin{center}
\includegraphics[scale=0.4]{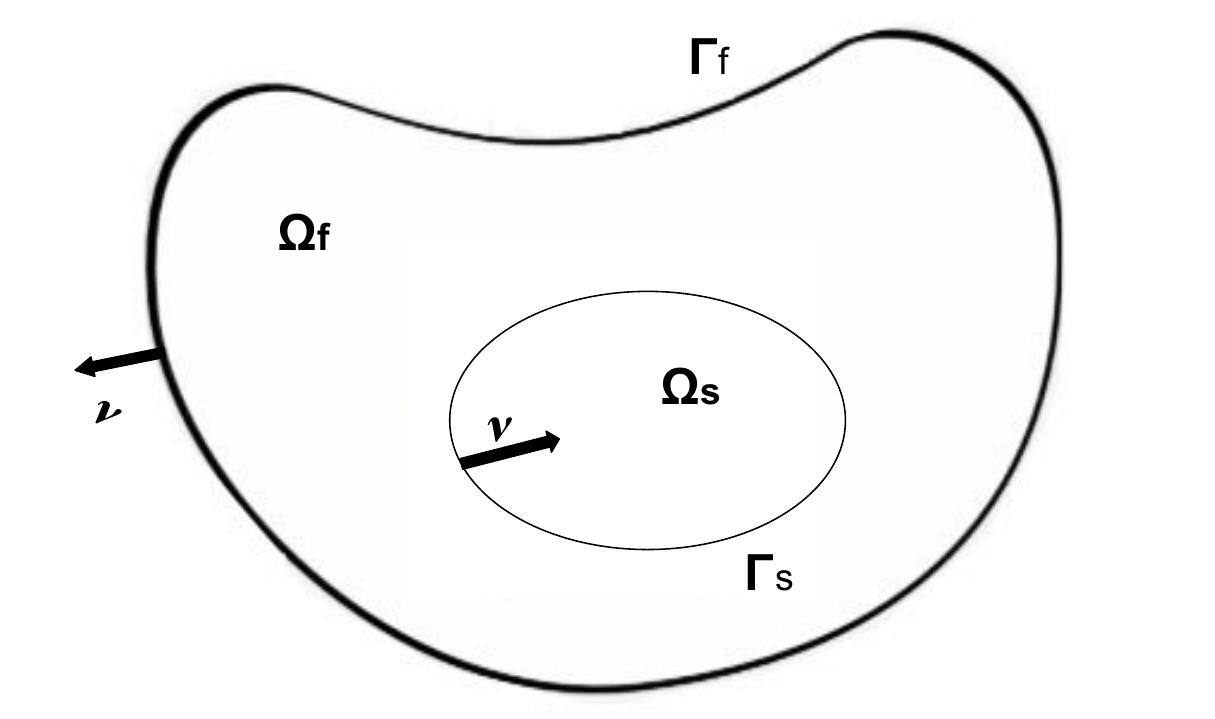}

\textbf{Figure: Geometry of the FSI Domain}
\end{center}

\noindent The boundary $\Gamma _{s}=\partial \Omega _{s},$ between $\Omega _{f}$ and $%
\Omega _{s}$ is of class $C^2,$ and constitutes the interactive interface between the fluid and
structure dynamics. In addition, $\nu (.)$ is the unit normal vector which is outward with
respect to $\Omega _{f},$ and so inwards with respect to $\Omega _{s}.$ 
\vspace{0.5cm}

\noindent With the geometry $%
\{\Omega _{s},\Omega _{f}\}$ above, the PDE system in variables $[u,h,h_t,w,w_t]$ is:

\begin{equation}
\left\{ 
\begin{array}{l}
u_{t}-\text{div}(\nabla u+\nabla ^{T}u)+\nabla p=0\text{ \ \ \ in \ }%
(0,T)\times \Omega _{f} \\ 
\text{div}(u)=0\text{ \ \ \ \ \ \ \ \ \ \ \ \ \ \ \ \ \ \ \ \ \ \ \  \ \ \ \ \ \  in \ }(0,T)\times \Omega _{f} \\ 
u|_{\Gamma _{f}}=0\text{ \ \ \ \ \ \ \ \ \ \ \ \ \ \ \ \ \ \ \ \ \ \ \  \ \ \ \ \ \  \ \ \ on \ }(0,T)\times \Gamma _{f};%
\end{array}%
\right.   \label{2a}
\end{equation}
\begin{equation}
h_{tt}-\Delta _{\Gamma _{s}}h=[\nu \cdot
\sigma (w)]|_{\Gamma _{s}}-[\nu \cdot (\nabla u+\nabla ^{T}u)]|_{\Gamma
_{s}}+p \nu \text{ \ \ \ on \ }(0,T)\times \Gamma _{s}, \label{2.5b}
\end{equation}
\begin{equation}
\left\{ 
\begin{array}{l}
w_{tt}-\text{div}\sigma (w)+w=0\text{ \ \ \ \ \ \ \ \ \ \ \ \  \ on \ }(0,T)\times \Omega _{s} \\ 
w_{t}|_{\Gamma _{s}}=h_{t}=u|_{\Gamma _{s}}\text{ \ \ \ \ \ \ \ \ \ \ \ \ \ \ \ \ \ \ \ \ on \ }(0,T)\times
\Gamma _{s}%
\end{array}%
\right.   \label{2d}
\end{equation}%
\begin{equation}
\lbrack u(0),h(0),h_{t}(0),w(0),w_{t}(0)]=[\widetilde{u}_{0},\widetilde{h}_{0},\widetilde{h}_{1},\widetilde{w}_{0},\widetilde{w}_{1}]\in 
\mathbf{N}^{\bot}.  \label{IC}
\end{equation}%
Here, $\mathbf{N}$ represents the zero eigenspace of the semigroup generator which is associated to the above PDE system (see the definition of $\mathbf{A}:\mathbf{H} \rightarrow \mathbf{H}$ in \eqref{gener} below.) Also, $\lambda=0 $ is indeed an eigenvalue of $\mathbf{A}$, see Theorem \ref{eigen} below. Moreover, orthogonal complement $\mathbf{N}^{\bot}$ is characterized as
\begin{equation}\mathbf{N}^{\bot}=\{[u_0,h_{0},h_{1},w_{0},w_{1}]\in \mathbf{H}: \int_{\Gamma_s} (\nu \cdot h_0) d\Gamma_s =0\},\label {complement}\end{equation}
(see Theorem \ref{eigen} below,) where 
\begin{equation}
\begin{array}{l}
\mathbf{H}=\mathbf{N}\bigoplus \mathbf{N}^{\bot}=\{[u_{0},h_{0},h_{1},w_{0},w_{1}]\in [L^{2}(\Omega _{f})]^3\times
[H^{1}(\Gamma _{s})]^2\times [L^{2}(\Gamma _{s})]^2\times \\ 
\text{ \ \ \ \ \ \ \ \ \ \ \ }\times [H^{1}(\Omega _{s})]^3\times [L^{2}(\Omega
_{s})]^3: \text{ div}(u_{0})=0,\text{ \ \ }u_{0}\cdot \nu |_{\Gamma
_{f}}=0\text{, ~and~~ }w_{0}|_{\Gamma _{s}}=h_{0}\text{ }\}%
\end{array}
\label{H}
\end{equation}%
is the associated finite energy Hilbert space with the inner product%
\begin{equation*}
\left\langle\Phi _{0},\widetilde{\Phi }_{0}\right\rangle_{\mathbf{H}} =\left\langle u_{0},\widetilde{u}%
_{0}\right\rangle_{\Omega _{f}}+\left\langle\nabla _{\Gamma _{s}}(h_{0}),\nabla _{\Gamma _{s}}(%
\widetilde{h}_{0})\right\rangle_{\Gamma _{s}}+\left\langle h_{1},\widetilde{h}_{1}\right\rangle_{\Gamma _{s}}
\end{equation*}
\begin{equation}
\text{\ \ \ \ \ \ \ \ \ }+\left\langle\sigma (w_{0}),\epsilon (\widetilde{w}_{0})\right\rangle_{\Omega _{s}}+\left\langle w_{0},%
\widetilde{w}_{0}\right\rangle_{\Omega _{s}}+\left\langle w_{1},\widetilde{w}_{1}\right\rangle_{_{\Omega _{s}}},
\label{Hilbert}
\end{equation}
for any
\begin{equation}
\Phi _{0}=\left[ u_{0},h_{0},h_{1},w_{0},w_{1}\right] \in \mathbf{H}\text{;
\ }\widetilde{\Phi }_{0}=\left[ \widetilde{u}_{0},\widetilde{h}_{0},%
\widetilde{h}_{1},\widetilde{w}_{0},\widetilde{w}_{1}\right] \in \mathbf{H}.
\label{stat}
\end{equation}
We also note here that $\Delta _{\Gamma _{s}}(.)$ is the Laplace Beltrami operator, and the stress tensor $\sigma(.) $ constitutes the Lam$\acute{e}$ system of elasticity on
the ``thick" layer. Namely, for function $v$ in $\Omega _{s},$ 
\[
\sigma (v)=2\mu \epsilon (v)+\lambda \lbrack I_{3}\cdot \epsilon (v)]I_{3},
\]%
where strain tensor $\epsilon (\cdot )$ is given by 
\[
\epsilon _{ij}(v)=\frac{1}{2}\left( \frac{\partial v_{j}}{\partial x_{i}}+%
\frac{\partial v_{i}}{\partial x_{j}}\right) ,\text{ \ \ }1\leq i,j\leq 3.
\]
\begin{remark}
 For the sake of numerical computation, the structure geometry $\Omega _{s}\subset 
\mathbb{R}
^{3},$ can also be taken to be a convex polyhedral domain with polygonal
boundary faces $\Gamma _{j},$ $1\leq j\leq K$, where $\Gamma _{i}\cap \Gamma
_{j}\neq \emptyset $ for $i\neq j,$ and, 
\[
\Gamma _{s}=\cup _{j=1}^{K}\overline{\Gamma }_{j}.
\]%

\begin{center}
\includegraphics[scale=0.4]{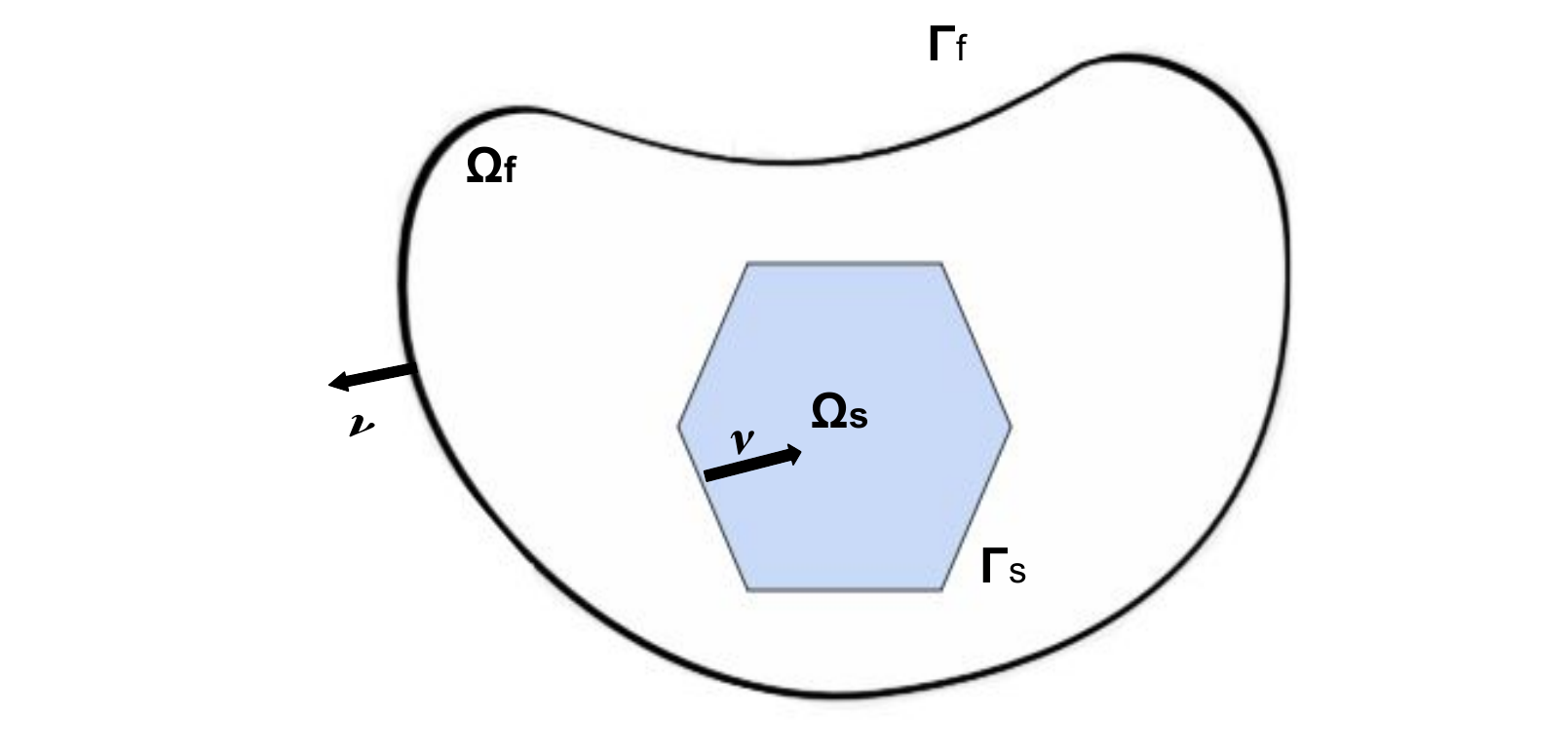}

\textbf{Figure: Alternative Geometry of the FSI Domain}
\end{center}
In this case, the thin wave equation can be modeled for $j=1,...,K$ as%
\[
\left\{ 
\begin{array}{l}
\frac{\partial ^{2}}{\partial t^{2}}h_{j}-\Delta h_{j}=[\nu \cdot
\sigma (w)]|_{\Gamma _{j}}-[\nu \cdot (\nabla u+\nabla ^{T}u)]|_{\Gamma
_{j}}+p \nu 
\text{ \ \ \ on \ }(0,T)\times \Gamma _{j} \\ 
h_{j}|_{\partial \Gamma _{j}\cap \partial \Gamma _{l}}=h_{l}|_{\partial
\Gamma _{j}\cap \partial \Gamma _{l}}\text{ on \ }(0,T)\times (\partial
\Gamma _{j}\cap \partial \Gamma _{l})\text{,\ } \forall~ 1\leq l\leq K\text{
s.t. }\partial \Gamma _{j}\cap \partial \Gamma _{l}\neq \emptyset  \\ 
\left. \dfrac{\partial h_{j}}{\partial n_{j}}\right\vert _{\partial \Gamma
_{j}\cap \partial \Gamma _{l}}=-\left. \dfrac{\partial h_{_{l}}}{\partial
n_{l}}\right\vert _{\partial \Gamma _{j}\cap \partial \Gamma _{l}}\text{on \ 
}(0,T)\times (\partial \Gamma _{j}\cap \partial \Gamma _{l})\text{,\ } \forall~ %
1\leq l\leq K\text{ s.t }\partial \Gamma _{j}\cap \partial \Gamma
_{l}\neq \emptyset .\text{\ }%
\end{array}%
\right. 
\]%
where the Laplace Beltrami Operator $\Delta _{\Gamma _{s}}(.)$ in \eqref{2.5b} is replaced with the standard Laplace
operator with the imposition of additional continuity and boundary conditions in
order to satisfy the surface differentiation (See \cite{AGM, RD}.)
   
\end{remark}
\subsection{Notation}

For the remainder of the text, norms $||\cdot ||_{D}$ are taken to be $%
L^{2}(D)$ for the domain $D$. Inner products in $L^{2}(D)$ are written $%
<\cdot ,\cdot >_{D}$, and the inner products $L_{2}(\partial D)$ are written $%
\langle \cdot ,\cdot \rangle_{\partial D} $. The space $H^{s}(D)$ will denote the Sobolev
space of order $s$, defined on a domain $D$, and $H_{0}^{s}(D)$ denotes the
closure of $C_{0}^{\infty }(D)$ in the $H^{s}(D)$ norm which we denote by $%
\Vert \cdot \Vert _{H^{s}(D)}$ or $\Vert \cdot \Vert _{s,D}$. 

\noindent For the sake of simplicity, we denote $$\mathbf{L^2}(\Omega_f)=[L^{2}(\Omega _{f})]^3,~~\mathbf{L^2}(\Omega_s)=[L^{2}(\Omega
_{s})]^3,~~\mathbf{H^1}(\Omega_s)=[H^{1}(\Omega _{s})]^3,~~\mathbf{H^1}(\Gamma_s)=[H^{1}(\Gamma _{s})]^2.$$ 
Also, we make use of the standard notation for the trace of functions defined on a Lipschitz
domain $D$; i.e. for a scalar function $\phi \in H^{1}(D)$, we denote $%
\gamma (w)$ to be the trace mapping from $H^{1}(D)$ to $H^{1/2}(\partial D)$%
. That is, $\gamma (w)=w|_{\partial D},$ for $w\in C^{\infty}(D).$ We will also denote pertinent duality pairings as $(\cdot ,\cdot
)_{X\times X^{\prime }}$.

\section{Preliminaries}

The PDE system given in (\ref{2a})-(\ref%
{IC}) may be associated with an abstract ODE in Hilbert space $\mathbf{H};$ Namely,

\begin{equation}
\left\{ 
\begin{array}{l}
\frac{d}{dt}\Phi (t)=\mathbf{A}\Phi (t)\\
\Phi (0)=\Phi _{0}
\label{ODE}
\end{array}%
\right. 
\end{equation}
where $\Phi (t)=\left[ u(t),h(t),h_{t}(t),w(t),w_{t}(t)\right]
,$ and $\Phi _{0}=[u_{0},h_{0},h_{1},w_{0},w_{1}].$ Here, the operator $\mathbf{A}:D(\mathbf{A})\subset 
\mathbf{H}\rightarrow \mathbf{H}$ is defined by%
\begin{equation}
\mathbf{A}=\left[ 
\begin{array}{ccccc}
\text{div}(\nabla (.)+\nabla ^{T}(.)) & 0 & 0 & 0 & 0 \\ 
0 & 0 & I & 0 & 0 \\ 
-[\nu \cdot (\nabla (.)+\nabla ^{T}(.))]|_{\Gamma _{s}} & \Delta _{\Gamma
_{s}}(.) & 0 & \nu \cdot \sigma (\cdot )|_{\Gamma _{s}} & 0 \\ 
0 & 0 & 0 & 0 & I \\ 
0 & 0 & 0 & \text{div}\sigma (\cdot )-I & 0%
\end{array}%
\right]   \nonumber
\end{equation}%
\begin{equation}
+\left[ 
\begin{array}{ccccc}
\text{-}\nabla \mathcal{P}_{1}(.) & \text{-}\nabla \mathcal{P}_{2}(.) & 0 & \text{-}\nabla
\mathcal{P}_{3}(.) & 0 \\ 
0 & 0 & 0 & 0 & 0 \\ 
\mathcal{P}_{1}(.) \nu  & \mathcal{P}_{2}(.) \nu  & 0 & \mathcal{P}_{3}(.) \nu  & 0 \\ 
0 & 0 & 0 & 0 & 0 \\ 
0 & 0 & 0 & 0 & 0%
\end{array}%
\right].  \label{gener}
\end{equation}%
Here, the ``pressure" operators $\mathcal{P}_{i}$ are as defined below in \eqref{9.6}-\eqref{9.8}. As established in \cite{Ger}, the domain $D(\mathbf{A})$ of the generator $\mathbf{A}$ is characterized as follows: $\left[ u_{0},h_{0},h_{1},w_{0},w_{1}\right] \in D(\mathbf{A}) \Longleftrightarrow $ \\

\noindent \textbf{(A.i)} \text{ \ } $u_{0}\in \bh^{1}(\Omega _{f}),$
\text{ \ \  }$h_{1}\in \bh^{1}(\Gamma _{s}),$\text{ \ \  }%
$w_{1}\in \bh^{1}(\Omega _{s});$

\vspace{0.3cm}

\noindent \textbf{(A.ii)} There exists an associated $L^2(\Omega_f)$-function $p_0=p_0(u_0,h_0,w_0)$ such that $$[\text{div}(\nabla u_{0}+\nabla
^{T}u_{0})-\nabla p_{0}]\in L^{2}(\Omega _{f}).$$ Consequently, $p_0$ is harmonic and so  one has the boundary traces\\ \\
\textbf{(a)} $[p_{0}|_{\partial \Omega_f},\frac{\partial p_0}{\partial \nu}|_{\partial \Omega_f}]\in H^{-\frac{1}{2}}(\partial \Omega_f)\times H^{-\frac{3}{2}}(\partial \Omega_f);$\\ \\
\textbf{(b)} $(\nabla u_{0}+\nabla
^{T}u_{0})\cdot \nu \in H^{-\frac{3}{2}}(\partial \Omega _{f}),$

 \vspace{0.3cm}

\noindent \textbf{(A.iii)} \text{div}$\sigma
(w_{0})\in L^{2}(\Omega _{s});$ \text{ \  consequently,\  } $\nu \cdot \sigma
(w_{0}) \in H^{-\frac{1}{2}}(\Gamma_s),$

 \vspace{0.3cm}
 
\noindent \textbf{(A.iv)}  $\Delta _{\Gamma _{s}}(h_{0})+[\nu
\cdot \sigma (w_{0})]|_{\Gamma _{s}}-[(\nabla u_{0}+\nabla ^{T}u_{0})\cdot
\nu ]|_{\Gamma _{s}}+[p_{0} \nu ]|_{\Gamma _{s}}\in L^{2}(\Gamma _{s}),$

\vspace{0.3cm}

\noindent \textbf{(A.v)} $u_{0}|_{\Gamma _{f}}=0,\ \
u_{0}|_{\Gamma _{s}}=h_{1}=w_{1}|_{\Gamma _{s}}$

\vspace{0.3cm}

\noindent We note that the elimination of the pressure variable is very crucial in order to formulate the PDE system (\ref{2a})-(\ref%
{IC}) as an ODE problem given in \eqref{ODE}. For this, we basically apply the divergence operator to the Stokes equation in
(\ref{2a}), and use the fact that $u$ is solenoidal. This gives that the
(pointwise) pressure variable $p(t)$ is harmonic; i.e.,%
\begin{equation}
\Delta p(t)=0\text{ \ \ \ in }\Omega _{f}.  \label{p-1}
\end{equation}

\noindent Subsequently, we multiply (\ref{2.5b}) by $\nu |_{\Gamma _{s}}$ and use the matching velocity
condition in (\ref{2d}) along with system (\ref{2a}) to obtain the following boundary condition for the
pressure variable $p:$%
\begin{equation}
p+\frac{\partial p}{\partial \nu }=\text{div}(\nabla (u)+\nabla
^{T}(u))\cdot \nu |_{\Gamma _{s}}+[(\nabla u+\nabla ^{T}u)\cdot \nu -\Delta
_{\Gamma _{s}}(h)-\nu \cdot \sigma (w)|_{\Gamma _{s}}]\cdot \nu |_{\Gamma
_{s}}. \label{p-2}
\end{equation}%
Also, since $u$ is divergence free, if we take the inner product of both sides of \eqref{2a}$_1,$ with an extension of the normal vector, and subsequently take the trace of this relation on  $\Gamma_f,$ we have $$\frac{\partial p}{\partial \nu}=[\text{div}(\nabla u+\nabla^T u)]\cdot \nu~~on~~ \Gamma_f.$$
Accordingly, the pressure variable $p(t),$ as the solution of (\ref{p-1}%
)-(\ref{p-2}), can formally be written pointwise in time as%
\[
p(t)=\mathcal{P}_{1}(u(t))+\mathcal{P}_{2}(h(t))+\mathcal{P}_{3}(w(t)) \label{9.5}
\]%
where the harmonic functions $\mathcal{P}_{1}(u(t)),$ $\mathcal{P}_{2}(h(t))$, and $\mathcal{P}_{3}(w(t))$
solve the following elliptic BVPs:%

\begin{equation}
\left\{ 
\begin{array}{l}
\Delta \mathcal{P}_{1}(u) =0\text{ \ \ \ \ \   in \ \  }\Omega _{f}   \\
\mathcal{P}_{1}(u)=\text{div}(\nabla
(u)+\nabla ^{T}(u))\cdot \nu |_{\Gamma _{s}}+([(\nabla u+\nabla ^{T}u)]\cdot
\nu )\cdot \nu |_{\Gamma _{s}}\text{\ \ \ \ \ \ on \ \  }\Gamma _{s},\\
\frac{\partial \mathcal{P}_{1}(u)}{\partial \nu } =\text{div}(\nabla
(u)+\nabla ^{T}(u))\cdot \nu |_{\Gamma _{f}}\text{\ \ \ \ \ on \ \  }\Gamma _{f},
\end{array}%
\right. \label{9.6}
\end{equation}
 
\begin{equation}
\left\{ 
\begin{array}{l}
\Delta \mathcal{P}_{2}(h) =0\text{ \ \ \ \ \ \ \ \ \ \ \ \ \ \ \ \ \ \ \  in \ \  }\Omega _{f}  \\
\mathcal{P}_{2}(h)=-\Delta _{\Gamma
_{s}}(h)\cdot \nu |_{\Gamma _{s}}\text{\ \ \ \ \ on \ \  }\Gamma _{s},\\
\frac{\partial \mathcal{P}_{2}(h)}{\partial \nu } =0\text{\ \ \ \ \ \ \ \ \ \ \ \ \ \ \ \ \ \ \ \ \ \  on \ \  }\Gamma _{f},
\end{array}%
\right. \label{9.7}
\end{equation}%
and 
\begin{equation}
\left\{ 
\begin{array}{l}
\Delta \mathcal{P}_{3}(w) =0\text{ \ \ \ \ \ \ \ \ \ \ \ \ \ \ \ \ \ \ \ \ \ \ \ \ \  in \ \  }\Omega _{f}  \\
\mathcal{P}_{3}(w)=-[\nu \cdot \sigma
(w)|_{\Gamma _{s}}]\cdot \nu |_{\Gamma _{s}}\text{\ \ \ \ \ on \ \  }\Gamma _{s}\\
\frac{\partial \mathcal{P}_{3}(w)}{\partial \nu } =0\text{\ \ \ \ \ \ \ \ \ \ \ \ \ \ \ \ \ \ \ \ \ \ \ \ \ \ \ \  on \ \  }\Gamma _{f},
\end{array}%
\right. \label{9.8}
\end{equation}%
The construction of these $\mathcal{P}_{i}$ functions, defined as the solutions to above harmonic equations, allows for the elimination of the pressure term in the system (\ref{2a})-(\ref{IC}). As such, the
pressure-free system can indeed be associated with the abstract ODE \eqref{ODE} in Hilbert
space $\mathbf{H},$ and the associated pressure function $p_0$ in \textbf{(A.ii)} can be identified explicitly, via \begin{equation} p_0=\mathcal{P}_{1}(u_0)+\mathcal{P}_{2}(h_0)+\mathcal{P}_{3}(w_0). \label{11.5} \end{equation}

\noindent (See \cite{Ger}). The wellposedness of the PDE system (\ref{2a})-(%
\ref{IC}), or equivalently the abstract ODE system \eqref{ODE} was proved in \cite{Ger} by means of a nonstandard mixed variational formulation and the use of Lumer Philips Theorem.  For reader's reference, we state here the recent wellposedness result:
\begin{theorem}
\label{well}\cite{Ger} With reference to problem (\ref{2a})-(\ref{IC}), the operator $\mathbf{A}:D(\mathbf{A})\subset \mathbf{H}%
\rightarrow \mathbf{H}$, defined in (\ref{gener}), generates a $%
C_{0}$-semigroup of contractions on $\mathbf{H}$. Consequently, the solution\\ 
$\Phi (t)=\left[ u(t),h(t),h_{t}(t),w(t),w_{t}(t)\right] $ of (\ref{2a})-(\ref{IC}), or
equivalently (\ref{ODE}), is given by 
\[
\Phi (t)=e^{\mathbf{A}t}\Phi _{0}\in C([0,T];\mathbf{H})\text{,}
\]%
where $\Phi _{0}=\left[ u_{0},h_{0},h_{1},w_{0},w_{1}\right] \in \mathbf{H}$.
\end{theorem}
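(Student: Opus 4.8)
\noindent The plan is to apply the Lumer--Phillips theorem on the complex Hilbert space $\mathbf{H}$: it suffices to verify that $\mathbf{A}$ is dissipative and that $\mathrm{Range}(\lambda I-\mathbf{A})=\mathbf{H}$ for some (equivalently, any) $\lambda>0$. A preliminary step, already anticipated in the Preliminaries, is to justify the pressure elimination: applying $\mathrm{div}$ to \eqref{2a}$_1$ and using $\mathrm{div}\,u=0$ forces $\Delta p=0$ as in \eqref{p-1}; dotting \eqref{2.5b} with $\nu$ and taking traces on $\Gamma_s$, together with $\partial p/\partial\nu=[\mathrm{div}(\nabla u+\nabla^{T}u)]\cdot\nu$ on $\Gamma_f$, supplies the Robin/Neumann data \eqref{p-2}; and the associated $L^2(\Omega_f)$ pressure is then $p_0=\mathcal{P}_1(u_0)+\mathcal{P}_2(h_0)+\mathcal{P}_3(w_0)$ through the harmonic BVPs \eqref{9.6}--\eqref{9.8}, cf. \eqref{11.5}. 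This identification is what permits the coupled PDE to be recast as the abstract ODE \eqref{ODE} with the operator \eqref{gener}.

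\emph{Dissipativity.} For $\Phi_0=[u_0,h_0,h_1,w_0,w_1]\in D(\mathbf{A})$, I would evaluate $\langle\mathbf{A}\Phi_0,\Phi_0\rangle_{\mathbf{H}}$ block by block using \eqref{Hilbert}. Integrating by parts in $\Omega_f$ produces the dissipative term $-\tfrac12\|\nabla u_0+\nabla^{T}u_0\|_{\Omega_f}^2$ together with the interface integral $\langle(\nabla u_0+\nabla^{T}u_0)\nu-p_0\nu,\,u_0\rangle_{\Gamma_s}$, the $\Gamma_f$-part of the boundary integral vanishing since $u_0|_{\Gamma_f}=0$. Integrating by parts in $\Omega_s$ in the Lam\'e block and on the closed surface $\Gamma_s$ in the Laplace--Beltrami block, all the volume, elastic, and tangential cross terms collect into purely imaginary quantities by the symmetry of $\sigma(\cdot)$ and conjugate symmetry of the inner products, leaving interface integrals over $\Gamma_s$ built from $\nu\cdot\sigma(w_0)$, $\nu\cdot(\nabla u_0+\nabla^{T}u_0)$, and $p_0\nu$. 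Invoking the kinematic matching $u_0|_{\Gamma_s}=h_1=w_1|_{\Gamma_s}$ from (A.v) and the fact that $\nu$ is inward with respect to $\Omega_s$, these interface integrals cancel in pairs, so that $\mathrm{Re}\,\langle\mathbf{A}\Phi_0,\Phi_0\rangle_{\mathbf{H}}=-\tfrac12\|\nabla u_0+\nabla^{T}u_0\|_{\Omega_f}^2\le0$; note the three pressure operators are arranged precisely so this final cancellation occurs.

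\emph{Maximality.} Fix $\lambda>0$ and $\mathbf{F}=[f_1,\dots,f_5]\in\mathbf{H}$; I would solve $(\lambda I-\mathbf{A})\Phi_0=\mathbf{F}$. The second and fourth rows give $h_1=\lambda h_0-f_2$ and $w_1=\lambda w_0-f_4$, reducing the problem to a static, pressure-coupled elliptic system for $(u_0,h_0,w_0,p_0)$ subject to $\mathrm{div}\,u_0=0$, $u_0|_{\Gamma_f}=0$, $u_0|_{\Gamma_s}=\lambda h_0-f_2$, and $w_0|_{\Gamma_s}=h_0$. After lifting the inhomogeneous interface datum, I would recast this as a mixed variational problem for triples $(u,h,w)\in\bh^{1}(\Omega_f)\times\bh^{1}(\Gamma_s)\times\bh^{1}(\Omega_s)$ obeying the homogeneous trace matchings, with the pressure $q\in L^2(\Omega_f)$ playing the role of Lagrange multiplier for $\mathrm{div}\,u=0$. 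The principal bilinear form --- the sum of $\lambda\langle u,\phi\rangle_{\Omega_f}+\tfrac12\langle\nabla u+\nabla^{T}u,\nabla\phi\rangle_{\Omega_f}$, $\lambda^2\langle h,\eta\rangle_{\Gamma_s}+\langle\nabla_{\Gamma_s}h,\nabla_{\Gamma_s}\eta\rangle_{\Gamma_s}$, and $\lambda^2\langle w,\psi\rangle_{\Omega_s}+\langle\sigma(w),\epsilon(\psi)\rangle_{\Omega_s}+\langle w,\psi\rangle_{\Omega_s}$ --- is bounded and, by Korn's inequality on $\Omega_f$ and $\Omega_s$ together with $\lambda>0$, coercive on the whole space, in particular on the divergence-free kernel, while $(\phi,q)\mapsto\langle\mathrm{div}\,\phi,q\rangle_{\Omega_f}$ satisfies the standard inf--sup condition on the Lipschitz domain $\Omega_f$. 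Brezzi's (Babuska--Brezzi) theorem then yields a unique weak solution $(u_0,h_0,w_0,p_0)$. A bootstrap using elliptic regularity for the Stokes system, for the Lam\'e system, and for the Laplace--Beltrami operator on $\Gamma_s$ --- propagated across the interface --- upgrades the weak solution to the regularity demanded by (A.i)--(A.iv), with $p_0$ identified via \eqref{11.5}; hence $\Phi_0\in D(\mathbf{A})$ and $(\lambda I-\mathbf{A})\Phi_0=\mathbf{F}$, completing the Lumer--Phillips argument.

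\emph{Principal obstacle.} The crux is the pressure. In contrast to heat--structure couplings, $p$ is not a priori an $L^2$ object and its traces belong only to negative-order Sobolev spaces (cf. (A.ii)(a)--(b)), so its removal forces the nonstandard harmonic lifts $\mathcal{P}_i$, and in the resolvent equation the compatibility identity $\int_{\Gamma_s}u_0\cdot\nu=\int_{\Omega_f}\mathrm{div}\,u_0=0$ --- which, after the substitution above, pins down $\lambda\int_{\Gamma_s}h_0\cdot\nu=\int_{\Gamma_s}f_2\cdot\nu$ --- must be shown to be automatically consistent with the $h$-block before the inf--sup step closes. A secondary difficulty is the mixed-dimensional interface: matching the 3D Stokes fluid, the 2D membrane on $\Gamma_s$, and the 3D Lam\'e body along a single surface means the interface lifts and the regularity bootstrap must be handled with care about the $H^{-1/2}$ and $H^{-3/2}$ traces that appear in the characterization of $D(\mathbf{A})$.
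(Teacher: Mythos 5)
First, note that the paper does not prove this theorem at all: it is quoted verbatim from \cite{Ger}, and the manuscript only records that the proof there proceeds ``by means of a nonstandard mixed variational formulation and the use of Lumer Philips Theorem,'' with the fluid variable obtained by decoupling the Stokes equation and the elastic variables by a Babuska--Brezzi argument. Your outline therefore follows the same broad strategy as the cited proof, and your dissipativity computation is consistent with the identity $\mathrm{Re}\langle\mathbf{A}\Phi,\Phi\rangle_{\mathbf{H}}=-\tfrac12\Vert\nabla u_0+\nabla^T u_0\Vert^2$ that the paper itself derives in \eqref{relation}.

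There is, however, a genuine gap in your maximality step, and it sits exactly at the point the paper singles out as the main difficulty. You treat the full pressure $q\in L^2(\Omega_f)$ as the Lagrange multiplier for $\mathrm{div}\,u=0$ and invoke ``the standard inf--sup condition on the Lipschitz domain $\Omega_f$.'' That standard condition pairs $H^1_0$ velocities with \emph{zero-mean} pressures $\hat L^2(\Omega_f)$; the constant component of the pressure is invisible to $\langle\mathrm{div}\,\phi,q\rangle_{\Omega_f}$ for admissible $\phi$ and cannot be recovered this way. In this problem the constant $c_0$ is not a harmless normalization: it enters the thin-layer equation on $\Gamma_s$ through $p_0\nu$ and must be determined as an additional scalar unknown. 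The paper's own treatment of the analogous static problem (Lemma \ref{resolv}) shows how this is done: one first solves the Stokes system for $(u_0,q)$ with $q\in\hat L^2(\Omega_f)$, and then sets up a Babuska--Brezzi problem in which the multiplier is the \emph{constant} $c_0\in\mathbb{R}$, with the inf--sup condition verified for $\mathbf{b}([\varphi,\psi],r)=-r\int_{\Gamma_s}\nu\cdot\varphi\,d\Gamma_s$ via a Laplace--Beltrami auxiliary problem; the compatibility identity $\int_{\Gamma_s}u_0\cdot\nu=0$ that you mention is precisely what makes this constraint equation solvable and is the origin of the characterization of $\mathbf{N}^{\perp}$ in \eqref{comp}. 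You correctly identify this as the ``principal obstacle'' but then leave it unresolved, so as written the Babuska--Brezzi step does not close. A secondary, related imprecision: for $\lambda>0$ the interface datum $u_0|_{\Gamma_s}=\lambda h_0-f_2$ involves the unknown $h_0$, so it is a coupling rather than an inhomogeneous datum that can simply be ``lifted''; one must either build $\lambda h$ into the trial space (making the space $\lambda$-dependent, which you gesture at) or express $u_0$ as a Stokes solution operator applied to $h_0$ and substitute, as \cite{Ger} does.
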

\begin{remark}
Given the form of the adjoint $A^*:D(A^*)\subset \mathbf{H} \rightarrow \mathbf{H},$ in Proposition \ref{form-adj} below, it is readily seen that $\lambda>0$ is also an eigenvalue of $A^*$ with $$Null(\mathbf{A}^*)=Null(\mathbf{A})=\mathbf{N}.$$ Consequently, one has ``invariance of the flow" on $\mathbf{N}^{\bot}.$ That is, if $\Phi_0 \in \mathbf{N}^{\bot},$ then $$e^{\mathbf{A}t}\Phi _{0}\in C([0,T];\mathbf{N}^{\bot}).$$ 

\end{remark}

\section{Main Result: Strong Stability of the Multilayered structure-Stokes fluid PDE System}

This section is devoted to addressing the issue of asymptotic behavior of the
solution whose existence-uniqueness is guaranteed by Theorem \ref{well}. In
this regard, we show that the system given in (\ref{2a})-(%
\ref{IC}) is
strongly stable in the space $\mathbf{N}^{\bot}$ (see \eqref{complement} for definition). Before giving our main result, we state the following Assumption which is crucial in our proof:
\begin{assumption}
\label{assump} Given a fixed $\beta \neq 0,$ suppose that the function $w_0 \in \bh^1(\Omega_s)$ satisfies the following static PDE problem:
\begin{equation*}
\left\{ 
\begin{array}{l}
-\beta^2 w_{0}-\text{div}~\sigma (w_{0})+w_{0}=0\text{ \ \ \ in \ }%
\Omega _{s} \\ 
w_{0}|_{\Gamma _{s}}=0\text{ \ \ \ on \ }\Gamma _{s}\\
\nu \cdot \sigma(w_0)=-c_0 \nu \text{ \ \ \ on \ }\Gamma _{s}.
\end{array}%
\right. 
\end{equation*}%
Then, the solution of this overdetermined problem is $w_0=0,$ and so necessarily $c_0=0.$
\end{assumption}
Such overdetermined eigenvalue boundary value problems also play a part in considering stability properties of other fluid-structure PDE systems. See, e.g. \cite{Av, av-trig, AvalosTriggiani09}. Now, we give our stability result:
\begin{theorem}
\label{main} 
Let Assumption \ref{assump} hold. Then, with reference to the dynamical system (\ref{2a})-(%
\ref{IC}), for any $[\widetilde{u}_{0},\widetilde{h}_{0},\widetilde{h}_{1},\widetilde{w}_{0},\widetilde{w}_{1}]\in 
\mathbf{N}^{\bot},$
the corresponding solution $\left[ u(t),{h}(t),h_{t}(t),w(t),w_{t}(t)\right] \in
C([0,T];\mathbf{N}^{\bot})$ of (\ref{2a})-(%
\ref{IC}) satisfies%
\begin{equation*}
\lim_{t\rightarrow \infty }||\left[ u(t),{h}(t),h_{t}(t),w(t),w_{t}(t)%
\right] || _{\mathbf{N}^{\bot}}=0.
\end{equation*}
\end{theorem}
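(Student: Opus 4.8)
The plan is to establish strong stability via the Arendt--Batty criterion \cite{A-B}: since $\mathbf{A}$ generates a contraction $C_0$-semigroup on $\mathbf{H}$ (Theorem \ref{well}) and, by the invariance of the flow noted above, its restriction $\mathbf{A}_{\bot}:=\mathbf{A}|_{\mathbf{N}^{\bot}}$ generates a contraction semigroup on the closed subspace $\mathbf{N}^{\bot}$, it suffices to show that $\sigma(\mathbf{A}_{\bot})\cap i\mathbb{R}$ is countable and that $\mathbf{A}_{\bot}^{*}$ has no eigenvalue on $i\mathbb{R}$. I would in fact aim to show $\sigma(\mathbf{A}_{\bot})\cap i\mathbb{R}=\emptyset$, and I would organize the work by eliminating, in turn, the point, residual, and continuous spectra of $\mathbf{A}_{\bot}$ from the imaginary axis. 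The case $\beta=0$ is automatically under control on $\mathbf{N}^{\bot}$, since $\mathrm{Null}(\mathbf{A})=\mathrm{Null}(\mathbf{A}^{*})=\mathbf{N}$ meets $\mathbf{N}^{\bot}$ trivially.

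For the point spectrum, fix $\beta\neq 0$ and suppose $\mathbf{A}\Phi=i\beta\Phi$ with $\Phi=[u_{0},h_{0},h_{1},w_{0},w_{1}]\in D(\mathbf{A})$. Taking the real part of $\langle i\beta\Phi-\mathbf{A}\Phi,\Phi\rangle_{\mathbf{H}}=0$ and invoking the dissipation identity (the sole dissipative contribution being the fluid term, proportional to $\|\nabla u_{0}+\nabla^{T}u_{0}\|^{2}_{\Omega_{f}}$) forces $\epsilon(u_{0})=0$ in $\Omega_{f}$; hence $u_{0}$ is an infinitesimal rigid displacement, and the boundary condition $u_{0}|_{\Gamma_{f}}=0$ on the two-dimensional surface $\Gamma_{f}$ yields $u_{0}\equiv 0$. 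Then the matching conditions in \textbf{(A.v)} together with $h_{1}=i\beta h_{0}$ and $w_{1}=i\beta w_{0}$ give, in cascade, $h_{1}=0$, $h_{0}=0$, $w_{1}|_{\Gamma_{s}}=0$ and $w_{0}|_{\Gamma_{s}}=0$. Moreover, the fluid equation with $u_{0}=0$ collapses to $\nabla p_{0}=0$, so $p_{0}\equiv c_{0}$ is constant in $\Omega_{f}$; the interface equation \textbf{(A.iv)} then reduces to $\nu\cdot\sigma(w_{0})=-c_{0}\nu$ on $\Gamma_{s}$, while the thick-structure equation becomes $-\beta^{2}w_{0}-\mathrm{div}\,\sigma(w_{0})+w_{0}=0$ in $\Omega_{s}$. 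This is exactly the overdetermined boundary value problem of Assumption \ref{assump}, which forces $w_{0}=0$ and $c_{0}=0$; hence $\Phi=0$. Thus $\sigma_{p}(\mathbf{A}_{\bot})\cap i\mathbb{R}=\emptyset$. For the residual spectrum, $i\beta\in\sigma_{r}(\mathbf{A}_{\bot})$ would force $-i\beta\in\sigma_{p}(\mathbf{A}_{\bot}^{*})$; using the explicit form of $\mathbf{A}^{*}$ from Proposition \ref{form-adj} (whose structure mirrors that of $\mathbf{A}$), the same dissipation-plus-cascade argument, again closed off by Assumption \ref{assump}, rules this out, so $\sigma_{r}(\mathbf{A}_{\bot})\cap i\mathbb{R}=\emptyset$.

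For the continuous spectrum (and, simultaneously, for the point $\beta=0$ on $\mathbf{N}^{\bot}$), I would solve the static resolvent system $(i\beta I-\mathbf{A})\Phi=F$ for arbitrary $F\in\mathbf{N}^{\bot}$ by reproducing the pressure-elimination and mixed variational (Babuska--Brezzi) construction of \cite{Ger}: first recover the pressure through the harmonic operators $\mathcal{P}_{1},\mathcal{P}_{2},\mathcal{P}_{3}$ in \eqref{9.6}--\eqref{9.8}, then obtain the thick elastic component $w_{0}$ and the remaining variables from a coercive variational formulation, and derive, by applying appropriate multipliers, the a priori bound $\|\Phi\|_{\mathbf{H}}\le C(\beta)\|F\|_{\mathbf{H}}$. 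Combined with density of the range of $(i\beta I-\mathbf{A}_{\bot})$ (which follows from the injectivity of the adjoint problem established above), this a priori estimate gives bounded invertibility, i.e. $i\beta\in\rho(\mathbf{A}_{\bot})$. Hence $\sigma(\mathbf{A}_{\bot})\cap i\mathbb{R}=\emptyset$, and the Arendt--Batty theorem yields $\lim_{t\to\infty}\|e^{\mathbf{A}_{\bot}t}\Phi_{0}\|_{\mathbf{N}^{\bot}}=0$ for every $\Phi_{0}\in\mathbf{N}^{\bot}$, which is the assertion of Theorem \ref{main}.

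The main obstacle I anticipate is controlling the thick three-dimensional elastic component $w_{0}$. Both in the eigenvalue analysis and in the resolvent estimate, once $u_{0}$, $h_{0}$, $h_{1}$ have been eliminated, $w_{0}$ is seen to satisfy an overdetermined elliptic system on $\Omega_{s}$ with both a Dirichlet trace and a prescribed traction on $\Gamma_{s}$ — which is precisely why Assumption \ref{assump} must be imposed — and one must simultaneously keep uniform track of the nonlocal pressure contributions $\mathcal{P}_{i}$ and of the coupling transmitted through the thin interface equation \eqref{2.5b}. This coupled bookkeeping, rather than any single estimate, is the technically heaviest part of the argument.
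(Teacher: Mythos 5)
Your overall architecture coincides with the paper's: Arendt--Batty on $\mathbf{N}^{\bot}$, elimination of the point, residual, and continuous spectra from $i\mathbb{R}\setminus\{0\}$, and a separate Babuska--Brezzi resolvent construction at $\lambda=0$. Your point-spectrum cascade (dissipation $\Rightarrow \epsilon(u_0)=0 \Rightarrow u_0=0$ via the clamping on $\Gamma_f$, then $h_1=h_0=0$, $p_0=c_0$ constant, and the overdetermined traction/Dirichlet problem for $w_0$ closed by Assumption \ref{assump}) and your residual-spectrum argument via $\sigma_p(\mathbf{A}^*)$ are exactly the paper's Lemmas \ref{point} and \ref{res}; your treatment of $\lambda=0$ is exactly Lemma \ref{resolv}.

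The one genuine divergence is the continuous spectrum for $\beta\neq 0$. The paper does \emph{not} solve $(i\beta I-\mathbf{A})\Phi=F$ directly; it argues by contradiction on an approximate-eigenvector sequence $\|\Phi_n\|_{\mathbf{H}}=1$, $\|(i\beta I-\mathbf{A})\Phi_n\|<1/n$, and the entire technical weight lies in showing that the thick elastic component cannot carry the unit norm in the limit: one passes to $z_n=w_{0n}+\frac{i}{\beta}D_s[\cdots]$, invokes elliptic $H^2$ regularity to get a \emph{bounded} (not vanishing) trace $\sigma(z_n)\cdot\nu$, extracts weak limits, and only then applies Assumption \ref{assump} to the limiting overdetermined problem, upgrading to strong convergence by compactness. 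Your proposed route — a direct a priori bound $\|\Phi\|_{\mathbf{H}}\le C(\beta)\|F\|_{\mathbf{H}}$ from "a coercive variational formulation" plus dense range — is logically equivalent in outcome but glosses over the precise point where coercivity fails: for $\beta\neq 0$ the relevant form is $\mathbf{a}(\cdot,\cdot)-\beta^2\langle\cdot,\cdot\rangle$, which is not $\mathbf{S}$-elliptic, so no Lax--Milgram/Babuska--Brezzi argument yields the bound directly. Any honest derivation of that a priori estimate for $w_0$ is a compactness--uniqueness argument in which Assumption \ref{assump} is the uniqueness step — i.e., you are driven back to the paper's contradiction scheme. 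So the proposal is correct in strategy, but the continuous-spectrum step as written is a sketch of the conclusion rather than of the mechanism; to complete it you would need to reproduce (in a priori-estimate form) Steps II--IV of the paper's Lemma \ref{cont}.
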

The domain $D(\mathbf{A})$ of the semigroup generator is not compactly embedded in $\mathbf{H},$ and so classical weak stability approaches are inapplicable here. See e.g., \cite[page 378, Corollary 3.1]{bench}. The proof of Theorem \ref{main} is based on the well known spectral criterion given by W. Arendt and C.J.K. Batty \cite{A-B} (see Appendix, Theorem \ref{sta}). The application of Theorem \ref{sta} on the dynamical system (\ref{2a})-(\ref{IC}) relies on analyzing the spectral properties of the operator $\mathbf{A}$ defined in \eqref{gener}. We will give our analysis in a few steps: 
\vspace{0.3cm}

\noindent\textbf{Zero eigenvalue for the generator $\mathbf{A}$ and explicit characterization of $\mathbf{N}=Null(\mathbf{A})$}
\vspace{0.3cm}

\noindent In order to analyze the long term dynamics of the system (\ref{2a})-(\ref{IC}), we need to avoid steady states so as to reasonably consider the possibility of finite energy solutions tending to the zero state at infinity. For this reason, in the process of applying  Theorem \ref{sta}, we firstly show that zero is an eigenvalue for the operator $\mathbf{A}$. Moreover,  we give an explicit characterization for the corresponding zero eigenspace $\mathbf{N}=Null(\mathbf{A})$ and its orthogonal complement $\mathbf{N}^{\bot}=Null(\mathbf{A})^{\bot}.$

\begin{theorem}
\label{eigen} 
With reference to the PDE system (\ref{2a})-(\ref{IC}) and the corresponding generator $\mathbf{A}$ defined in \eqref{gener}, the point zero is an eigenvalue for $\mathbf{A}.$ Moreover, the following explicit characterizations follow: \begin{equation} \mathbf{N}=Null(\mathbf{A})=span\{\phi\in \mathbf{H}:\phi = \left[\begin{array}{c}0 \\h_0(1) \\0 \\w_0(1) \\0\end{array}\right] \} .\label{null} \end{equation} 
Here, for any scalar $\alpha,$ the pair 
$$[h_0(\alpha),w_0(\alpha)]\in \mathbf{S}=\{[f,g]\in \bh^1(\Gamma_s)\times \bh^1(\Omega_s): f=g|_{\Gamma_s}\}$$ satisfies the variational relation
$$\left\langle \nabla _{\Gamma _{s}}(h_{0}),\nabla _{\Gamma
_{s}}(f) \right\rangle _{\Gamma _{s}}+\left\langle \sigma(w_0),\epsilon(g) \right\rangle _{\Omega _{s}}+\left\langle w_0,g \right\rangle _{\Omega _{s}}=\alpha \left\langle \nu,f \right\rangle _{\Gamma _{s}},~~ \forall~[f,g]\in \mathbf{S}.$$
Also,
 \begin{equation}\mathbf{N}^{\bot}=Null(\mathbf{A})^{\bot}=\{\widetilde{\phi}=\left[\begin{array}{c}\widetilde{u_0} \\\widetilde{h_0}\\ \widetilde{h_1} \\\widetilde{w_0} \\\widetilde{w_1}\end{array}\right]\in \mathbf{H}: \int_{\Gamma_s}( \nu \cdot \widetilde{h_0})~ d\Gamma_s=0\}. \label{comp}\end{equation} 
\end{theorem}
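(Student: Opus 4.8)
The plan is to show first that $\lambda=0$ is an eigenvalue and that $Null(\mathbf{A})$ has the stated form, and then to identify $Null(\mathbf{A})^{\bot}$ by a duality/variational argument. To find $Null(\mathbf{A})$, I would take $\Phi_0=[u_0,h_0,h_1,w_0,w_1]\in D(\mathbf{A})$ with $\mathbf{A}\Phi_0=0$ and read off the component equations. The second and fourth rows of $\mathbf{A}$ immediately give $h_1=0$ and $w_1=0$. The first row, together with \textbf{(A.ii)}, forces $\mathrm{div}(\nabla u_0+\nabla^T u_0)-\nabla p_0=0$ in $\Omega_f$; pairing this with $u_0$, integrating by parts, and using the boundary conditions $u_0|_{\Gamma_f}=0$, $u_0|_{\Gamma_s}=h_1=0$ from \textbf{(A.v)} kills the boundary terms and yields $\|\nabla u_0+\nabla^T u_0\|_{\Omega_f}^2=0$; Korn's inequality (with the homogeneous Dirichlet condition on $\Gamma_f$) then gives $u_0=0$, and hence $p_0$ solves a homogeneous Neumann/Robin problem so that $p_0$ is constant (or, tracking \eqref{9.6}-\eqref{9.8}, $\mathcal P_1(u_0)=0$). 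The fifth row gives $\mathrm{div}\,\sigma(w_0)=w_0$ in $\Omega_s$, and the third row gives $\Delta_{\Gamma_s}h_0+[\nu\cdot\sigma(w_0)]|_{\Gamma_s}-[(\nabla u_0+\nabla^T u_0)\cdot\nu]|_{\Gamma_s}+p_0\nu=0$ on $\Gamma_s$; with $u_0=0$ this reduces to $\Delta_{\Gamma_s}h_0+[\nu\cdot\sigma(w_0)]|_{\Gamma_s}+p_0\nu=0$. The residual pressure $p_0$ here is exactly $\mathcal P_2(h_0)+\mathcal P_3(w_0)$, a harmonic function determined by $h_0,w_0$ through \eqref{9.7}-\eqref{9.8}; I would absorb this term and recast the pair $[h_0,w_0]\in\mathbf{S}$ as the solution of the variational problem obtained by multiplying the $w_0$ equation by $g$, the $h_0$ equation by $f$ with $f=g|_{\Gamma_s}$, integrating by parts over $\Omega_s$ and $\Gamma_s$ respectively, and adding. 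The Lam\'e interior term, the surface gradient term and the zeroth-order $w_0$ term assemble into the left side of the claimed identity; the only surviving right-hand contribution is $\langle p_0\nu, f\rangle_{\Gamma_s}$, which because $p_0$ is harmonic with the Neumann data in \eqref{9.7}-\eqref{9.8} collapses to a constant multiple of $\langle \nu, f\rangle_{\Gamma_s}$ — this constant is the scalar $\alpha$. One then checks the map $\alpha\mapsto[h_0(\alpha),w_0(\alpha)]$ is linear (the bilinear form on $\mathbf S$ is coercive on $\mathbf S$ by Korn plus the $+w_0$ term plus Poincar\'e on $\Gamma_s$, so Lax--Milgram applies for each $\alpha$), so $Null(\mathbf{A})$ is spanned by the single element corresponding to $\alpha=1$, giving \eqref{null}; nontriviality (that $\alpha=1$ gives a nonzero solution) follows since testing with $[f,g]=[h_0,w_0]$ gives $\langle\nu,h_0\rangle_{\Gamma_s}=\|[h_0,w_0]\|_{\mathbf S}^2\ge 0$, and this cannot vanish because otherwise $h_0=0$, $w_0=0$ forces $\alpha\langle\nu,f\rangle=0$ for all $f$, contradicting $\alpha=1$ (here $\langle \nu, f\rangle_{\Gamma_s}$ is not identically zero over $f\in\mathbf S$ since one can prescribe $f=g|_{\Gamma_s}$ with $g$ an arbitrary $\bh^1(\Omega_s)$ extension).

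For the orthogonal complement, I would compute $\langle\widetilde\phi,\phi\rangle_{\mathbf{H}}$ for $\widetilde\phi=[\widetilde u_0,\widetilde h_0,\widetilde h_1,\widetilde w_0,\widetilde w_1]\in\mathbf H$ and the generator of $\mathbf N$, namely $\phi=[0,h_0(1),0,w_0(1),0]$. From the inner product \eqref{Hilbert}, the $u_0$, $h_1$, $w_1$ slots contribute nothing, and what remains is $\langle\nabla_{\Gamma_s}\widetilde h_0,\nabla_{\Gamma_s}h_0(1)\rangle_{\Gamma_s}+\langle\sigma(\widetilde w_0),\epsilon(w_0(1))\rangle_{\Omega_s}+\langle\widetilde w_0,w_0(1)\rangle_{\Omega_s}$. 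Now I invoke the variational relation satisfied by $[h_0(1),w_0(1)]$ with test function $[f,g]=[\widetilde h_0,\widetilde w_0]$ — which is legitimate since $\widetilde\phi\in\mathbf H$ forces $\widetilde w_0|_{\Gamma_s}=\widetilde h_0$, i.e. $[\widetilde h_0,\widetilde w_0]\in\mathbf S$ — and by symmetry of the bilinear form this equals $\langle\nabla_{\Gamma_s}h_0(1),\nabla_{\Gamma_s}\widetilde h_0\rangle_{\Gamma_s}+\langle\sigma(w_0(1)),\epsilon(\widetilde w_0)\rangle_{\Omega_s}+\langle w_0(1),\widetilde w_0\rangle_{\Omega_s}=1\cdot\langle\nu,\widetilde h_0\rangle_{\Gamma_s}=\int_{\Gamma_s}\nu\cdot\widetilde h_0\,d\Gamma_s$. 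Hence $\langle\widetilde\phi,\phi\rangle_{\mathbf H}=\int_{\Gamma_s}\nu\cdot\widetilde h_0\,d\Gamma_s$, so $\widetilde\phi\perp\mathbf N$ iff this integral vanishes, which is \eqref{comp}.

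I expect the main obstacle to be the careful bookkeeping of the residual pressure term $p_0$ in the null-space computation: showing rigorously that the pressure contribution in the $h_0$-equation reduces exactly to a constant times $\nu$ on $\Gamma_s$ (so that it can be packaged as the scalar $\alpha$), using that $\mathcal P_2(h_0)+\mathcal P_3(w_0)$ is harmonic in $\Omega_f$ with zero normal derivative on $\Gamma_f$ — which by the characterization in \textbf{(A.ii)}(a) and the divergence theorem applied to the harmonic $p_0$ forces $\int_{\Gamma_s}\frac{\partial p_0}{\partial\nu}\,d\Gamma_s=0$ and more to the point pins the boundary structure — together with justifying all the integrations by parts at the low regularity of $D(\mathbf A)$ (the traces in \textbf{(A.ii)}(b) and \textbf{(A.iii)} live only in negative Sobolev spaces, so the pairings must be read in the appropriate duality sense). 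A secondary technical point is verifying coercivity of the bilinear form on $\mathbf S$: the surface-gradient term only controls $h_0$ up to constants, but the $+w_0$ term and the trace constraint $h_0=w_0|_{\Gamma_s}$ together with Korn's inequality on $\Omega_s$ restore full control of the $\bh^1(\Gamma_s)\times\bh^1(\Omega_s)$ norm, so Lax--Milgram indeed delivers a unique (hence linearly $\alpha$-dependent) solution and the one-dimensionality of $\mathbf N$ follows.
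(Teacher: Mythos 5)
Your proposal is correct and follows essentially the same route as the paper: the dissipativity identity forcing $u_0=0$ and $p_0$ constant, the reduction of the residual thin/thick system to a coercive variational problem on $\mathbf{S}$ solved by Lax--Milgram with right-hand side $\alpha\langle\nu,f\rangle_{\Gamma_s}$, and the identification of $\mathbf{N}^{\bot}$ via the $\mathbf{H}$-inner product and that same variational relation. If anything, you make explicit two points the paper leaves implicit — the nontriviality of the $\alpha=1$ eigenvector and the computation showing $\langle\widetilde\phi,\phi\rangle_{\mathbf{H}}=\int_{\Gamma_s}\nu\cdot\widetilde h_0\,d\Gamma_s$ — so no substantive gap remains.
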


\begin{proof}
\noindent Suppose $\Phi =\left[
u_{0},h_{0},h_{1},w_{0},w_{1}\right] \in D(\mathbf{A})$ is a solution of 
\begin{equation}\mathbf{A}\Phi=0, \label{zero}
\end{equation} 
where $\mathbf{A},$ is as given in \eqref{gener}. With pressure term $p_0$ as defined in \eqref{11.5}, this equation generates the following PDEs:
\begin{equation}
\left\{ 
\begin{array}{l}
\text{div}(\nabla u_{0}+\nabla ^{T}u_{0})-\nabla p_0=0\text{ \ \ \ in \ }\Omega _{f} \\ 
\text{div}(u_0)=0~~~in~~~ \Omega_f\\
h_1=0 \text{ \ \ \ in \ }\Gamma _{s}\\ 
-[\nu \cdot (\nabla u_{0}+\nabla ^{T}u_{0})]|_{\Gamma
_{s}}+\Delta _{\Gamma _{s}}(h_{0})+[\nu \cdot \sigma (w_0)]|_{\Gamma
_{s}}+p_0 \nu =0\text{ \ \ \ in \ }\Gamma_{s}\\
w_1=0\text{ \ \ \ in \ }\Omega _{s}\\
\text{div}\sigma (w_{0})-w_{0}=0\text{ \ \ \ in \ }\Omega _{s}\\
u_0|_{\Gamma_f}=0,~~u_0|_{\Gamma_s}=h_1=0.
\end{array}%
\right.   \label{e1}
\end{equation}
Multiplying \eqref{zero} by $\Phi,$ we take
\[
\left\langle \mathbf{A}\Phi ,\Phi \right\rangle _{\mathbf{H}}=\left\langle 
\text{div}(\nabla (u_{0})+\nabla ^{T}(u_{0})),u_{0}\right\rangle _{\Omega
_{f}}+\left\langle \nabla _{\Gamma _{s}}(h_{1}),\nabla _{\Gamma
_{s}}(h_{0})\right\rangle _{\Gamma _{s}}
\]%
\[
+\left\langle -(\nabla u_{0}+\nabla ^{T}u_{0})\cdot \nu |_{_{\Gamma
_{s}}},h_{1}\right\rangle _{\Gamma _{s}}+\left\langle \Delta _{\Gamma
_{s}}(h_{0}),h_{1}\right\rangle _{\Gamma _{s}}+\left\langle \sigma
(w_{0})\cdot \nu |_{\Gamma _{s}},h_{1}\right\rangle _{\Gamma _{s}}
\]%
\[
+\left\langle \sigma (w_{1}),\epsilon (w_{0})\right\rangle _{\Omega
_{s}}+\left\langle w_{1},w_{0}\right\rangle _{\Omega _{s}}+\left\langle 
\text{div}\sigma (w_{0}),w_{1}\right\rangle _{\Omega _{s}}-\left\langle
w_{0},w_{1}\right\rangle _{\Omega _{s}}
\]%
\[
-\left\langle [\nabla \mathcal{P}_{1}(u_{0})+\nabla \mathcal{P}_{2}(h_{0})+\nabla
\mathcal{P}_{3}(w_{0})],u_{0}\right\rangle _{\Omega _{f}}
\]%
\[
+\left\langle [\mathcal{P}_{1}(u_{0})\cdot \nu +\mathcal{P}_{2}(h_{0})\cdot \nu
+\mathcal{P}_{3}(w_{0})\cdot \nu ],h_{1}\right\rangle _{\Gamma _{s}}=0.
\]%
Applying Green's theorem, using the fact that $u_{0}$ is solenoidal, $u_{0}=0
$ on $\partial \Omega_{f},$ and $h_1=w_1|_{\Gamma_s}=0$ on $\Gamma_s$ we get 
\[
\left\langle \mathbf{A}\Phi ,\Phi \right\rangle _{\mathbf{H}}=-\frac{1}{2}\left\Vert \nabla (u_{0})+\nabla ^{T}(u_{0})\right\Vert ^{2}
+\left\langle \sigma (w_{1}),\epsilon (w_{0})\right\rangle _{\Omega
_{s}}+\left\langle w_{1},w_{0}\right\rangle _{\Omega _{s}}-\left\langle \sigma (w_{0}),\epsilon (w_{1})\right\rangle _{\Omega
_{s}}-\left\langle w_{0},w_{1}\right\rangle _{\Omega _{s}}=0.
\]
or,%
\[
\left\langle \mathbf{A}\Phi ,\Phi \right\rangle _{\mathbf{H}}=-\frac{1}{2}%
\left\Vert \nabla (u_{0})+\nabla ^{T}(u_{0})\right\Vert ^{2}+2i\text{Im}\{%
\left\langle \sigma
(w_{1}),\epsilon (w_{0})\right\rangle _{\Omega _{s}}+\left\langle
w_{1},w_{0}\right\rangle _{\Omega _{s}}\}=0.
\]%
Hence 
\begin{equation}
\text{Re}\left\langle \mathbf{A}\Phi ,\Phi \right\rangle _{\mathbf{H}}=-%
\frac{1}{2}\left\Vert \nabla (u_{0})+\nabla ^{T}(u_{0})\right\Vert ^{2}, \label{relation}
\end{equation}%
which, considering also the boundary condition $u_0|_{\Gamma_f}=0,$ gives us 
\begin{equation}
u_0=0~~ in~~ \Omega_{f}. 
\label{u-0}
\end{equation}
In turn, we have from \eqref{e1}$_1$ that $$p_0=c_0~(constant).$$ Now, define the space $$\mathbf{S}=\{ [f,g] \in \bh^1(\Gamma_s)\times \bh^1(\Omega_s): f=g|_{\Gamma_s}\}.$$ Multiplying \eqref{e1}$_4$ by $f$ and \eqref{e1}$_6$ by $g,$ we have
\[
-\left\langle \nabla _{\Gamma _{s}}(h_{0}),\nabla _{\Gamma
_{s}}(f)\right\rangle _{\Gamma _{s}}+\left\langle \nu\cdot \sigma (w_{0})|_{\Gamma _{s}},f\right\rangle _{\Gamma _{s}}+\left\langle c_0 \nu,f\right\rangle _{\Gamma _{s}}
\]%
\begin{equation}
-\left\langle
\sigma (w_{0})\cdot \nu |_{\Gamma _{s}},g\right\rangle _{\Gamma _{s}}-\left\langle \sigma (w_{0}),\epsilon (g)\right\rangle _{\Omega
_{s}}-\left\langle w_{0},g\right\rangle _{\Omega _{s}}=0. \label{v-1}
\end{equation}
Using the fact that $f=g|_{\Gamma_s},$ we take the variational form in terms of the solution variables $\{h_0,w_0\}:$
\begin{equation}\mathbf{a}([h_{0},w_{0}],[f,g])=\mathbf{F}([f,g]),\text{ \ \ \ }\forall \text{ }[f ,g]\in \textbf{S},   \label{var}\end{equation}
where the bilinear form $\mathbf{a}(.,.):\textbf{S}\times \textbf{S}\rightarrow 
\mathbb{R}
$
is defined as 
$$\mathbf{a}([h_{0},w_{0}],[f,g])=\left\langle \nabla _{\Gamma _{s}}(h_{0}),\nabla _{\Gamma
_{s}}(f)\right\rangle _{\Gamma _{s}}+\left\langle \sigma (w_{0}),\epsilon (g)\right\rangle _{\Omega
_{s}}+\left\langle w_{0},g\right\rangle _{\Omega _{s}}
$$
and 
$$\mathbf{F}([f,g])=c_0\left\langle \nu,f\right\rangle _{\Gamma _{s}}.$$
Since it can easily be seen that the bilinear form $\mathbf{a}(.,.)$ is continuous and $\mathbf{S}$-elliptic, the application of Lax Milgram Theorem gives the existence and uniqueness of a solution $[h_0,w_0]\in \mathbf{S}$ to the variational equation \eqref{var}. To conclude the proof of Theorem \ref{eigen}, we must show that the derived solution $\left[
h_{0},w_{0}\right]\in D(\mathbf{A})$ and satisfies the equations in \eqref{e1}. To this end, if we take $g\in D(\Omega_s),$ and $f=0$ in \eqref{v-1} then we have
$$\left\langle -\text{div} \sigma(w_0)+w_0,g\right\rangle =0,~~~\forall~g\in D(\Omega_s),$$ and hence $$-\text{div} \sigma(w_0)+w_0=0,~~~\text{in}~~L^2(\Omega_s).$$
In consequence, we have
\begin{equation}||\sigma(w_0)\cdot \nu||_{H^{-1/2}(\Gamma_s)}\leq C||w_0||_{H^1(\Omega_s)}\leq C|c_0|. \label{i}\end{equation}
In turn: let $\gamma^+_0\in \mathcal{L}(H^{1/2}(\Gamma_s),H^1(\Omega_s))$ be the right inverse of the Dirichlet trace map\\ \\ $\gamma_0: H^1(\Omega_s)\rightarrow H^{1/2}(\Gamma_s).$ Therewith, setting $$[f,g]\equiv [f,\gamma^+_0(f)]$$ in \eqref{v-1} where $f\in \bh^1(\Gamma_s),$ we have
\[
\left\langle \nabla _{\Gamma _{s}}(h_{0}),\nabla _{\Gamma
_{s}}(f)\right\rangle _{\Gamma _{s}}+\left\langle \sigma (w_{0}),\epsilon (\gamma^+_0(f))\right\rangle _{\Omega
_{s}}+\left\langle w_{0},\gamma^+_0(f)\right\rangle _{\Omega _{s}}=c_0\left\langle  \nu,f\right\rangle _{\Gamma _{s}}
\]%
or
\[
\left\langle \nabla _{\Gamma _{s}}(h_{0}),\nabla _{\Gamma
_{s}}(f)\right\rangle _{\Gamma _{s}}+\left\langle \sigma (w_{0}),\epsilon (\gamma^+_0(f))\right\rangle _{\Omega
_{s}}+\left\langle w_{0},\gamma^+_0(f)\right\rangle _{\Omega _{s}}+\left\langle \text{div} \sigma(w_0)-w_0,\gamma^+_0(f)\right\rangle _{\Gamma _{s}}=c_0\left\langle  \nu,f\right\rangle _{\Gamma _{s}}.
\]%
Application of the Green's Theorem gives then
$$\left\langle -\Delta_{\Gamma_s}(h_{0})+\nu\cdot \sigma(w_0)\right\rangle _{\Gamma _{s}}=c_0\left\langle \nu,f\right\rangle _{\Gamma _{s}}=0$$
or 
\begin{equation}-\Delta_{\Gamma_s}(h_{0})+\nu\cdot \sigma(w_0)=c_0 \nu~~~in~~~ L^2(\Gamma_s).\label{ii} \end{equation}
In sum, we have that the obtained vector $\phi = \left[\begin{array}{c}0 \\h_0(1) \\0 \\w_0(1) \\0\end{array}\right]\in D(\mathbf{A})$ solves $$\mathbf{A}\phi =0,$$ and is indeed the zero eigenvector with the corresponding zero eigenspace $\mathbf{N}=Null\mathbf(A)$ could be characterized as in \eqref{null}. Subsequently, recalling the inner product introduced in \eqref{Hilbert}, and using the relations \eqref{i}-\eqref{ii}, the orthogonal complement $\mathbf{N}^{\bot}$ follows as in \eqref{comp}. This completes the proof of Theorem \eqref{eigen}.
\end{proof}
\\

\noindent The proof of Theorem \eqref{main} will rely on the ultimate application of the spectral criterion of W. Arendt and C. Batty for strong decay (see Appendix, Theorem \ref{sta}). This  will entail the elimination of all three parts of the spectrum of the generator $\mathbf{A}$ from the imaginary axis: In this connection, we now proceed with the analysis of the point spectrum $\sigma_{p}(\mathbf{A}).$
 
\begin{lemma} 
\label{point} 
Let Assumption \ref{assump} hold. With reference to the PDE system (\ref{2a})-(\ref{IC}) and the corresponding generator $\mathbf{A}$ defined in \eqref{gener}, given for $\beta \neq 0,$ $i\beta \notin \sigma_{p}(\mathbf{A}).$
\end{lemma}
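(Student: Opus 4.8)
The plan is to suppose, for contradiction, that $i\beta \in \sigma_p(\mathbf{A})$ for some $\beta \neq 0$, so that there is a nonzero $\Phi = [u_0,h_0,h_1,w_0,w_1] \in D(\mathbf{A})$ with $\mathbf{A}\Phi = i\beta \Phi$. First I would take the $\mathbf{H}$-inner product of this resolvent equation with $\Phi$ and extract the real part: since $\mathrm{Re}\langle \mathbf{A}\Phi,\Phi\rangle_{\mathbf{H}} = -\tfrac12\|\nabla u_0 + \nabla^T u_0\|^2_{\Omega_f}$ exactly as in \eqref{relation} (the derivation there did not use $\mathbf{A}\Phi = 0$, only the structure of $\mathbf{A}$), while $\mathrm{Re}\langle i\beta\Phi,\Phi\rangle_{\mathbf{H}} = 0$, we conclude $\nabla u_0 + \nabla^T u_0 \equiv 0$ in $\Omega_f$. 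Together with $u_0|_{\Gamma_f} = 0$ and Korn's inequality this forces $u_0 \equiv 0$ in $\Omega_f$. Then from the Stokes equation $\nabla p_0 = \mathrm{div}(\nabla u_0 + \nabla^T u_0) = 0$, so $p_0$ is a constant $c_0$; and from the interface matching condition $h_1 = u_0|_{\Gamma_s} = 0$, hence also $w_1|_{\Gamma_s} = h_1 = 0$. Plugging $h_1 = i\beta h_0$ and $w_1 = i\beta w_0$ (from the rows of $\mathbf{A}$ giving $h_1 = i\beta h_0$, $w_1 = i\beta w_0$) and $\beta \neq 0$ then gives $h_0 = 0$ on $\Gamma_s$ and $w_0 = 0$ on $\Gamma_s$.

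Next I would feed these reductions into the remaining equations. The thick-elastic row gives $\mathrm{div}\,\sigma(w_0) - w_0 = i\beta w_1 = -\beta^2 w_0$ in $\Omega_s$, i.e.
\begin{equation*}
-\beta^2 w_0 - \mathrm{div}\,\sigma(w_0) + w_0 = 0 \quad \text{in } \Omega_s, \qquad w_0|_{\Gamma_s} = 0.
\end{equation*}
Separately, the thin-interface row reads $-[\nu\cdot(\nabla u_0 + \nabla^T u_0)]|_{\Gamma_s} + \Delta_{\Gamma_s} h_0 + [\nu\cdot\sigma(w_0)]|_{\Gamma_s} + p_0\nu = i\beta h_1 = -\beta^2 h_0$; with $u_0 = 0$, $h_0 = 0$, $p_0 = c_0$ this collapses to $[\nu\cdot\sigma(w_0)]|_{\Gamma_s} = -c_0\nu$ on $\Gamma_s$. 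So $w_0$ satisfies precisely the overdetermined boundary value problem of Assumption \ref{assump}. Invoking Assumption \ref{assump} yields $w_0 \equiv 0$ in $\Omega_s$ and $c_0 = 0$, hence also $p_0 = 0$. Combined with $u_0 = 0$, $h_0 = 0$, $h_1 = 0$, $w_1 = i\beta w_0 = 0$, we get $\Phi = 0$, contradicting that $i\beta$ is an eigenvalue. Therefore $i\beta \notin \sigma_p(\mathbf{A})$ for every $\beta \neq 0$.

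One point I would be careful about is the regularity bookkeeping needed to legitimately integrate by parts in computing $\mathrm{Re}\langle\mathbf{A}\Phi,\Phi\rangle_{\mathbf{H}}$ and in passing from the weak interface/elastic identities to the pointwise PDEs in \eqref{e1}-style form; but since $\Phi \in D(\mathbf{A})$, properties \textbf{(A.i)}--\textbf{(A.v)} and the associated trace statements (in particular $\nu\cdot\sigma(w_0) \in H^{-1/2}(\Gamma_s)$ and the harmonicity and boundary traces of $p_0$) supply exactly what is required, and these manipulations mirror those already carried out in the proof of Theorem \ref{eigen}. The genuine obstacle — and the reason Assumption \ref{assump} is imposed in the first place — is the last step: the reduced system for $w_0$ is overdetermined (Dirichlet data \emph{and} a prescribed normal stress of the special form $-c_0\nu$ on all of $\Gamma_s$), and without a unique-continuation-type or geometric hypothesis one cannot in general conclude $w_0 = 0$. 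All the earlier steps are essentially forced; the content of the lemma is really the reduction of the eigenvalue problem to this overdetermined problem, after which Assumption \ref{assump} does the closing work.
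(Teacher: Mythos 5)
Your proposal is correct and follows essentially the same route as the paper: dissipativity plus Korn's inequality to kill $u_0$, propagation through the interface conditions to get $h_1=h_0=0$ and constant pressure, reduction of the thick layer to the overdetermined problem of Assumption \ref{assump}, and then the assumption to conclude $\Phi=0$. The only cosmetic difference is that the paper splits $p_0=q_0+c_0$ with $q_0$ mean-free before concluding the pressure is constant, which is equivalent to your direct observation that $\nabla p_0=0$.
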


\begin{proof} Suppose $\Phi =\left[
u_{0},h_{0},h_{1},w_{0},w_{1}\right] \in D(\mathbf{A})$ satisfies the relation 
\begin{equation}
(i\beta I-\mathbf{A})\Phi =0. \label{p1}
\end{equation}%
In PDE terms, we then have
\begin{equation}
\left\{ 
\begin{array}{l}
i\beta u_{0}-\text{div}(\nabla u_{0}+\nabla ^{T}u_{0})+\nabla p_0=0\text{ \ \ \ in \ }\Omega _{f} \\ 
\text{div}(u_{0})=0\text{ \ \ \ \ \ \ in \ }\Omega _{f} \\ 
u_{0}|_{\Gamma _{f}}=0\text{ \ \ \ on \ }\Gamma _{f};%
\end{array}%
\right.   \label{s1}
\end{equation}

\begin{equation}
\left\{ 
\begin{array}{c}
\begin{array}{l}
i\beta h_{0}-h_{1}=0\text{ \ \ in }\Gamma _{s}%
\end{array}
\\ 
i\beta h_{1}+[\nu \cdot (\nabla u_{0}+\nabla ^{T}u_{0})]|_{\Gamma
_{s}}-\Delta _{\Gamma _{s}}(h_{0})-[\nu \cdot \sigma (w_0)]|_{\Gamma
_{s}}-p_0 \nu =0\text{ \ \ \ in\ \ }\Gamma _{s},\text{ \ \ }%
\end{array}%
\right.   \label{s2}
\end{equation}%
\begin{equation}
\left\{ 
\begin{array}{l}
i\beta w_{0}-w_{1}=0\text{ \ \ in \ \ \ }\Omega _{s} \\ 
i\beta w_{1}-\text{div}\sigma (w_{0})+w_{0}=0\text{ \ \ \ in \ }%
\Omega _{s} \\ 
w_{1}|_{\Gamma _{s}}=h_{1}=u_{0}|_{\Gamma _{s}}\text{ \ \ \ on \ }\Gamma _{s}%
\end{array}%
\right.   \label{s3}
\end{equation}%
with $p_0$ being the associated pressure of the PDE system. Therewith, the usual energy method and the relation \eqref{relation} gives
\begin{equation}
\text{Re}\left\langle (i\beta I- \mathbf{A})\Phi ,\Phi \right\rangle _{\mathbf{H}}=-\text{Re}\left\langle \mathbf{A}\Phi ,\Phi \right\rangle _{\mathbf{H}}=%
\frac{1}{2}\left\Vert \nabla (u_{0})+\nabla ^{T}(u_{0})\right\Vert ^{2}= 0 \label{relation1}
\end{equation}%
which, considering also the boundary condition $u_0|_{\Gamma_f}=0,$ gives 
\begin{equation}
u_0=0~~ in~~ \Omega_{f}. 
\label{u0}
\end{equation}
Also, taking into account \eqref{u0} in \eqref{s3}$_3$, we have
\begin{equation}
w_1|_{\Gamma_s}=h_1=0~~ on~~ \Gamma_{s}. 
\label{h1}
\end{equation} 
Let $p_0=q_0+c_0,$ where $q_0\in \hat{L}^{2}(\Omega _{f}),$ where \begin{equation} \hat{L}^{2}(\Omega _{f})=\{f\in L^2(\Omega_f): \int_{\Omega_f} f d\Omega_f=0\}, \label{lhat}\end{equation} and $c_0$ is a constant. Then by \eqref{u0} and \eqref{s1}$_1,$   
\begin{equation}
q_0=0~~ in~~ \Omega_{f}. 
\label{q0}
\end{equation} 
Also, from \eqref{h1} and \eqref{s2}$_1$, we have
\begin{equation}
h_0=0~~ in~~ \Gamma_{s}. 
\label{h0}
\end{equation} 
Now, if we consider \eqref{h1} and \eqref{h0} in the thin layer equation \eqref{s2}$_2,$ as well as \eqref{s3}$_1$ in \eqref{s3}$_2,$ we get that if $\Phi =\left[
u_{0},h_{0},h_{1},w_{0},w_{1}\right] \in D(\mathbf{A})$ is an eigenfunction corresponding to eigenvalue $i\beta~~(\beta \neq 0),$ then $w_0$ solves the following overdetermined eigenvalue problem:
\begin{equation}
\left\{ 
\begin{array}{l}
-\beta^2 w_{0}-\text{div}\sigma (w_{0})+w_{0}=0\text{ \ \ \ in \ }%
\Omega _{s} \\ 
w_{0}|_{\Gamma _{s}}=0\text{ \ \ \ on \ }\Gamma _{s}\\
\nu \cdot \sigma(w_0)=-c_0 \nu \text{ \ \ \ on \ }\Gamma _{s}.
\end{array}%
\right.   \label{overdet}
\end{equation}%
Exploiting Assumption \ref{assump} for the problem \eqref{overdet} gives that $w_0=0$ and $c_0=0,$ which then yield that $ \sigma_{p}(\mathbf{A})\bigcap i \mathbb{R}=\emptyset.$ This completes the proof of Lemma \ref{point}.
\end{proof}
\\

\noindent In our second step, we continue with the analysis of the residual spectrum $\sigma_r(\mathbf{A}):$

\begin{lemma} 
\label{res} 
Let Assumption \ref{assump} hold. With reference to the PDE system (\ref{2a})-(\ref{IC}) and the corresponding generator $\mathbf{A}$ defined in \eqref{gener}, given $\beta \neq 0,$ $i\beta \notin \sigma_{r}(\mathbf{A}).$
\end{lemma}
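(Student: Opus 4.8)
The plan is to reduce the statement to the point-spectrum analysis already carried out in Lemma \ref{point}, but now applied to the adjoint operator. Recall that for a densely defined closed operator on a Hilbert space one has $\text{Range}(\lambda I-\mathbf{A})^{\bot}=\text{Null}(\overline{\lambda}I-\mathbf{A}^*)$, so that $\text{Range}(\lambda I-\mathbf{A})$ is dense iff $\text{Null}(\overline{\lambda}I-\mathbf{A}^*)=\{0\}$. Hence if $i\beta\in\sigma_r(\mathbf{A})$ for some $\beta\neq 0$, then $i\beta I-\mathbf{A}$ is injective with non-dense range, and therefore $-i\beta\in\sigma_p(\mathbf{A}^*)$. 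Since $\beta$ ranges over all nonzero reals, it suffices to prove that $i\beta\notin\sigma_p(\mathbf{A}^*)$ for every $\beta\neq 0$.

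First I would invoke the explicit form of $\mathbf{A}^*$ and of its domain $D(\mathbf{A}^*)$ furnished by Proposition \ref{form-adj}. The key structural feature to exploit is that $\mathbf{A}^*$ coincides with $\mathbf{A}$ except for sign reversals in the velocity/coupling entries (the adjoint being, modulo those signs, the time-reversed generator), so that the dissipation identity is preserved: one should verify from the explicit expression, via Green's theorem exactly as in the derivation of \eqref{relation}, that for any $\Psi=[u_0,h_0,h_1,w_0,w_1]\in D(\mathbf{A}^*)$ one still has $\text{Re}\left\langle \mathbf{A}^*\Psi,\Psi\right\rangle_{\mathbf{H}}=-\frac{1}{2}\left\Vert\nabla u_0+\nabla^T u_0\right\Vert^2$. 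Establishing this identity together with the correct reading of the compatibility and boundary-trace conditions defining $D(\mathbf{A}^*)$ is the one genuinely new computation; everything downstream is a transcription of the proof of Lemma \ref{point}.

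Next, given $\Psi\in D(\mathbf{A}^*)$ with $(i\beta I-\mathbf{A}^*)\Psi=0$, I would run the cascade from the proof of Lemma \ref{point} essentially verbatim. Taking the real part of $\left\langle(i\beta I-\mathbf{A}^*)\Psi,\Psi\right\rangle_{\mathbf{H}}=0$ and using the homogeneous Dirichlet condition on $\Gamma_f$ forces $u_0\equiv 0$ in $\Omega_f$; the adjoint fluid equation then makes the associated pressure a constant $c_0$; the matching velocity conditions give $h_1=w_1|_{\Gamma_s}=0$ and hence $h_0=0$ on $\Gamma_s$; and inserting all of this into the thick-layer equations together with the interface equation leaves $w_0$ satisfying the overdetermined eigenvalue problem \eqref{overdet} (the sign of $\beta^2$ being irrelevant, and at worst $c_0$ replaced by $-c_0$). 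Assumption \ref{assump} then yields $w_0=0$ and $c_0=0$, whence $\Psi=0$. Thus $i\beta\notin\sigma_p(\mathbf{A}^*)$, and by the reduction of the first paragraph, $i\beta\notin\sigma_r(\mathbf{A})$.

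The main obstacle I anticipate is purely one of bookkeeping with the adjoint: correctly transcribing $\mathbf{A}^*$ and $D(\mathbf{A}^*)$ from Proposition \ref{form-adj}, re-running the pressure-elimination multiplier argument so that $\text{Re}\left\langle\mathbf{A}^*\Psi,\Psi\right\rangle_{\mathbf{H}}$ again collapses to the fluid dissipation term alone, and checking that, after $u_0,h_0,h_1$ have been eliminated, the adjoint interface equation indeed reduces to the same overdetermined system \eqref{overdet}. Once these checks are in place the argument is complete with no further input beyond Assumption \ref{assump}.
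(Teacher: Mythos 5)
Your proposal is correct and takes essentially the same route as the paper: reduce membership in $\sigma_r(\mathbf{A})$ to membership of $-i\beta$ in $\sigma_p(\mathbf{A}^*)$ via the duality $\text{Range}(\lambda I-\mathbf{A})^{\bot}=\text{Null}(\overline{\lambda}I-\mathbf{A}^*)$, invoke the explicit form of $\mathbf{A}^*$ (Proposition \ref{form-adj}), and rerun the dissipation-plus-overdetermined-problem cascade of Lemma \ref{point} for the adjoint, closing with Assumption \ref{assump}. You in fact supply more detail than the paper, which simply asserts that under Assumption \ref{assump} the adjoint has no nonzero purely imaginary eigenvalues.
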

\vspace{0.5cm
}
\textbf{Proof.} The proof of Lemma \ref{res} relies on the wellknown fact that ``For any closed, densely defined operator $\mathbf{A}$, if $\lambda \in \sigma_r(\mathbf{A})$ then $\overline{\lambda}\in \sigma_p(\mathbf{A}^*)." $ (See e.g., \cite[page 127]{F}). Accordingly, we firstly give a representation of the adjoint operator $\mathbf{A}^*: D(\mathbf{A}^*)\rightarrow \mathbf{H}.$ In fact, a standard computation yields:
\begin{proposition} \label{form-adj}
For the generator operator $\mathbf{A}$ defined in \eqref{gener}, the Hilbert space adjoint\\ $\mathbf{A}^*: D(\mathbf{A}^*)\rightarrow \mathbf{H}$ is given by
\begin{equation}
\mathbf{A}^*=\left[ 
\begin{array}{ccccc}
\text{div}(\nabla (.)+\nabla ^{T}(.)) & 0 & 0 & 0 & 0 \\ 
0 & 0 & -I & 0 & 0 \\ 
-[\nu \cdot (\nabla (.)+\nabla ^{T}(.))]|_{\Gamma _{s}} &- \Delta _{\Gamma
_{s}}(.) & 0 & -\nu \cdot \sigma (\cdot )|_{\Gamma _{s}} & 0 \\ 
0 & 0 & 0 & 0 & -I \\ 
0 & 0 & 0 & -\text{div}~\sigma (\cdot )+I & 0%
\end{array}%
\right]   \nonumber
\end{equation}%
\begin{equation}
+\left[ 
\begin{array}{ccccc}
\text{-}\nabla \mathcal{P}_{1}(.) & \nabla \mathcal{P}_{2}(.) & 0 & \nabla
\mathcal{P}_{3}(.) & 0 \\ 
0 & 0 & 0 & 0 & 0 \\ 
\mathcal{P}_{1}(.) \nu  & -\mathcal{P}_{2}(.) \nu  & 0 & -\mathcal{P}_{3}(.) \nu  & 0 \\ 
0 & 0 & 0 & 0 & 0 \\ 
0 & 0 & 0 & 0 & 0%
\end{array}%
\right].  \label{adj}
\end{equation}%
The domain $D(\mathbf{A}^*)$ of $\mathbf{A}^*$ is characterized as follows: $\left[ \widetilde{u}_{0},\widetilde{h}_{0},\widetilde{h}_{1},\widetilde{w}_{0},\widetilde{w}_{1}\right] \in D(\mathbf{A}^*) \Longleftrightarrow $ \\

\noindent \textbf{(A.i)} \text{ \ } $\widetilde{u}_{0}\in \bh^{1}(\Omega _{f}),$
\text{ \ \  }$\widetilde{h}_{1}\in \bh^{1}(\Gamma _{s}),$\text{ \ \  }%
$\widetilde{w}_{1}\in \bh^{1}(\Omega _{s}),$

\vspace{0.3cm}

\noindent \textbf{(A.ii)} There exists an associated $L^2(\Omega_f)$-function $\widetilde{p}_0=\widetilde{p}_0(\widetilde{u}_0,\widetilde{h}_0,\widetilde{w}_0)$ such that $$[\text{div}(\nabla \widetilde{u}_{0}+\nabla
^{T}\widetilde{u}_{0})-\nabla \widetilde{p}_{0}]\in L^{2}(\Omega _{f}).$$ Consequently, $\widetilde{p}_0$ is harmonic and so \\ \\
\textbf{(a)} $[\widetilde{p}_{0}|_{\partial \Omega_f},\frac{\partial \widetilde{p}_0}{\partial \nu}|_{\partial \Omega_f}]\in H^{-\frac{1}{2}}(\partial \Omega_f)\times H^{-\frac{3}{2}}(\partial \Omega_f);$\\
\textbf{(b)} $(\nabla \widetilde{u}_{0}+\nabla
^{T}\widetilde{u}_{0})\cdot \nu \in H^{-\frac{3}{2}}(\partial \Omega _{f}),$

 \vspace{0.3cm}

\noindent \textbf{(A.iii)} \text{div}$\sigma
(\widetilde{w}_{0})\in L^{2}(\Omega _{s});$ \text{ \  consequently,\  } $\nu \cdot \sigma
(\widetilde{w}_{0}) \in H^{-\frac{1}{2}}(\Gamma_s),$

 \vspace{0.3cm}
 
\noindent \textbf{(A.iv)}  $-\Delta _{\Gamma _{s}}(\widetilde{h}_{0})-[\nu
\cdot \sigma (\widetilde{w}_{0})]|_{\Gamma _{s}}-[(\nabla \widetilde{u}_{0}+\nabla ^{T}\widetilde{u}_{0})\cdot
\nu ]|_{\Gamma _{s}}+[\widetilde{p}_{0} \nu ]|_{\Gamma _{s}}\in L^{2}(\Gamma _{s}),$

\vspace{0.3cm}

\noindent \textbf{(A.v)} $\widetilde{u}_{0}|_{\Gamma _{f}}=0,\ \
\widetilde{u}_{0}|_{\Gamma _{s}}=\widetilde{h}_{1}=\widetilde{w}_{1}|_{\Gamma _{s}}$

\end{proposition}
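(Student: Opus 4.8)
The plan is to compute the Hilbert-space adjoint straight from its definition: a vector $\Psi=[\widetilde u_0,\widetilde h_0,\widetilde h_1,\widetilde w_0,\widetilde w_1]\in\mathbf H$ lies in $D(\mathbf A^*)$ with $\mathbf A^*\Psi=\Xi$ precisely when $\langle\mathbf A\Phi,\Psi\rangle_{\mathbf H}=\langle\Phi,\Xi\rangle_{\mathbf H}$ for every $\Phi=[u_0,h_0,h_1,w_0,w_1]\in D(\mathbf A)$. I would first write out $\langle\mathbf A\Phi,\Psi\rangle_{\mathbf H}$ using \eqref{gener} and the inner product \eqref{Hilbert}, then transfer every spatial derivative off the $\Phi$-slot onto the $\Psi$-slot by Green's identities, carefully bookkeeping the boundary contributions on $\Gamma_f$ and on the interface $\Gamma_s$. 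Two geometric facts streamline this: $\Gamma_s$ is a closed manifold, so Laplace--Beltrami integrations by parts produce no boundary terms; and $\nu$ is outward for $\Omega_f$ but inward for $\Omega_s$, so integration by parts over $\Omega_s$ carries the normal $-\nu$, which is the source of several of the sign changes in \eqref{adj}.

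For the fluid and structure blocks I would proceed entry by entry. Green's theorem applied to $\langle\mathrm{div}(\nabla u_0+\nabla^T u_0),\widetilde u_0\rangle_{\Omega_f}$ (after symmetrizing the strain pairing) forces $\widetilde u_0\in\bh^1(\Omega_f)$, $\widetilde u_0|_{\Gamma_f}=0$ and $\widetilde u_0|_{\Gamma_s}=\widetilde h_1=\widetilde w_1|_{\Gamma_s}$: only then does the boundary term on $\Gamma_f$ vanish and the boundary term on $\Gamma_s$ cancel exactly against the entry $-[\nu\cdot(\nabla u_0+\nabla^T u_0)]|_{\Gamma_s}$ in the third component of $\mathbf A\Phi$. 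What survives is $\langle u_0,\mathrm{div}(\nabla\widetilde u_0+\nabla^T\widetilde u_0)\rangle_{\Omega_f}-\langle h_1,[\nu\cdot(\nabla\widetilde u_0+\nabla^T\widetilde u_0)]|_{\Gamma_s}\rangle_{\Gamma_s}$, i.e. the $(1,1)$ and $(3,1)$ entries of $\mathbf A^*$, and this is where (A.i) and (A.v) for $D(\mathbf A^*)$ come from. Likewise, integrating $\langle\sigma(w_1),\epsilon(\widetilde w_0)\rangle_{\Omega_s}$ and $\langle\mathrm{div}\,\sigma(w_0)-w_0,\widetilde w_1\rangle_{\Omega_s}$ by parts over $\Omega_s$ with normal $-\nu$, and $\langle\nabla_{\Gamma_s}h_1,\nabla_{\Gamma_s}\widetilde h_0\rangle_{\Gamma_s}$, $\langle\Delta_{\Gamma_s}h_0,\widetilde h_1\rangle_{\Gamma_s}$ over the closed manifold $\Gamma_s$, then using $w_1|_{\Gamma_s}=h_1$ and $\widetilde w_1|_{\Gamma_s}=\widetilde h_1$ to recombine all $\Gamma_s$-traces, one reads off precisely the first matrix in \eqref{adj}, while the demand $\Xi\in\mathbf H$ (third component in $L^2(\Gamma_s)$, fifth in $L^2(\Omega_s)$, admissible pressure traces) produces (A.ii)--(A.iv).

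The pressure block I would treat separately. Since each $\mathcal P_i(\cdot)$ is harmonic and $\widetilde u_0$ is solenoidal with $\widetilde u_0\cdot\nu|_{\Gamma_f}=0$, one has $\langle\nabla\mathcal P_i(a),\widetilde u_0\rangle_{\Omega_f}=\langle\mathcal P_i(a)\nu|_{\Gamma_s},\widetilde u_0\cdot\nu|_{\Gamma_s}\rangle_{\Gamma_s}$, so in the bilinear pairing the gradient terms in the first component and the $\mathcal P_i(\cdot)\nu$ terms in the third component mutually cancel; the same cancellation occurs on the $\langle\Phi,\mathbf A^*\Psi\rangle$ side since $u_0$ is solenoidal with $u_0|_{\Gamma_f}=0$. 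Hence the pressure entries of $\mathbf A^*$ are pinned down not by the pairing but by the requirement that $\mathbf A^*\Psi$ actually lie in $\mathbf H$ — first component divergence-free with zero normal trace on $\Gamma_f$, third component in $L^2(\Gamma_s)$ — which is exactly the pressure-elimination BVPs \eqref{9.6}--\eqref{9.8} re-run for the adjoint variables $(\widetilde u_0,\widetilde h_0,\widetilde w_0)$, now yielding the sign pattern $-\nabla\mathcal P_1+\nabla\mathcal P_2+\nabla\mathcal P_3$ and $\mathcal P_1\nu-\mathcal P_2\nu-\mathcal P_3\nu$ recorded in \eqref{adj}. That (A.i)--(A.v) characterize $D(\mathbf A^*)$ exactly, and not merely sufficiently, then follows because $-\mathbf A^*$ generates the adjoint $C_0$-semigroup of contractions on $\mathbf H$ (Theorem \ref{well}), so $\mathbf A^*$ is closed, densely defined, admits no strictly larger admissible domain, and satisfies $(\mathbf A^*)^*=\mathbf A$.

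The main obstacle will be the rigorous low-regularity interpretation of the traces: the pressure functions $\mathcal P_i(\cdot)$ depend on data known only to lie in negative-order Sobolev spaces (cf. (A.ii)(a),(b) and (A.iii)), so each integration by parts touching $\nabla\mathcal P_i$, $(\nabla u_0+\nabla^T u_0)\cdot\nu$, $\nu\cdot\sigma(w_0)$ or $\partial_\nu p_0$ must be read as a duality pairing, and one must verify that all such pairings are well defined and that the boundary cancellations above are genuine identities of functionals rather than formal manipulations. The cleanest way to discharge this is to follow the same variational/semigroup machinery used for $\mathbf A$ in \cite{Ger}: check that the candidate operator in \eqref{adj} with domain (A.i)--(A.v) is closed and that $-\mathbf A^*$ generates the adjoint contraction semigroup, so that its Hilbert-space adjoint returns $\mathbf A$.
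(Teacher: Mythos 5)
Your proposal is correct and is essentially the argument the paper has in mind: the paper offers no written proof beyond ``a standard computation yields,'' and the computation you outline --- the defining duality relation, Green's identities with careful tracking of the inward normal on $\Omega_s$ and of the closed manifold $\Gamma_s$, the cancellation of the pressure terms in the pairing against the $\mathcal{P}_i(\cdot)\nu$ interface terms, and the determination of the adjoint pressure block by the requirement $\mathbf{A}^*\Psi\in\mathbf{H}$ --- is exactly that standard computation. One small correction: it is $\mathbf{A}^*$ itself, not $-\mathbf{A}^*$, that generates the adjoint contraction semigroup $(e^{\mathbf{A}t})^*$; the closedness, maximality, and $(\mathbf{A}^*)^*=\mathbf{A}$ conclusions you draw from this are unaffected.
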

It is readily discerned that $Null(\mathbf{A})=Null(\mathbf{A}^*).$ In turn, $\lambda=0$ is an eigenvalue of $A^*.$ Moreover, under Assumption \ref{assump}, $i\beta~~(\beta \neq 0)$ is not an eigenvalue for the adjoint operator $\mathbf{A}^*$ and hence it is not in the residual spectrum of $\mathbf{A}.$ This completes the proof of Lemma \ref{res}.  
\\

\noindent Now, we continue with analyzing the continuous spectrum of the generator $\mathbf{A}.$ In this connection, we give the following Lemma: 

\begin{lemma}
\label{cont} Let Assumption \ref{assump} hold. With reference to the PDE system (\ref{2a})-(\ref{IC}) and the corresponding generator $\mathbf{A}$ defined in \eqref{gener}, for a given $\beta \neq 0,$ $i\beta \notin \sigma_{c}(\mathbf{A}).$\end{lemma}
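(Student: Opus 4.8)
The plan is to show that $i\beta I - \mathbf{A}$ is surjective for $\beta \neq 0$; combined with Lemma \ref{point} (injectivity) and Lemma \ref{res} (empty residual spectrum), this places $i\beta$ in the resolvent set, hence certainly $i\beta \notin \sigma_c(\mathbf{A})$. Actually, since $\mathbf{A}$ generates a contraction $C_0$-semigroup, it is closed and densely defined, so the continuous spectrum is characterized by: $i\beta \in \sigma_c(\mathbf{A})$ iff $i\beta I - \mathbf{A}$ is injective with dense but not closed range. Given that Lemmas \ref{point} and \ref{res} have already eliminated $\sigma_p$ and $\sigma_r$, it suffices to prove that $\mathrm{Range}(i\beta I - \mathbf{A})$ is closed (equivalently, all of $\mathbf{H}$). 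So first I would fix $\beta \neq 0$ and an arbitrary datum $\Psi = [f_1,f_2,f_3,f_4,f_5] \in \mathbf{H}$, and seek $\Phi = [u_0,h_0,h_1,w_0,w_1] \in D(\mathbf{A})$ solving $(i\beta I - \mathbf{A})\Phi = \Psi$, i.e. the static system obtained from \eqref{s1}--\eqref{s3} with the right-hand sides $f_j$ reinstated.

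The key step is to eliminate the first-order variables $h_1 = i\beta h_0 - f_2$ and $w_1 = i\beta w_0 - f_4$ (from the "$i\beta h_0 - h_1 = f_2$" and "$i\beta w_0 - w_1 = f_4$" relations), substitute into the remaining equations, and cast the resulting coupled Stokes–2D-elastic–3D-elastic static problem as a mixed variational formulation on an appropriate product space, exactly in the spirit of the Babuska–Brezzi / Lax–Milgram argument used in \cite{Ger} for semigroup generation and in the proof of Theorem \ref{eigen} above. Concretely, one writes the pressure $p_0$ via the operators $\mathcal{P}_i$ in \eqref{9.6}--\eqref{9.8}, tests the Stokes equation against divergence-free $\phi$ vanishing on $\Gamma_f$, the thin-layer equation against $f$, the thick-layer equation against $g$ with the interface constraint $\phi|_{\Gamma_s} = f = g|_{\Gamma_s}$, and adds. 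The principal part yields a sesquilinear form whose real part controls $\|\nabla u_0 + \nabla^T u_0\|^2 + \|w_0\|_{\mathbf{H}^1(\Omega_s)}^2 + \|\nabla_{\Gamma_s} h_0\|_{\Gamma_s}^2$ — using Korn's inequality on $\Omega_f$ and the $+w_0$ term and $-\beta^2$ shift on $\Omega_s$; the terms with $i\beta$ coupling are skew and the compact lower-order terms are handled by a Fredholm alternative. By the already-established injectivity (Lemma \ref{point}), the Fredholm alternative forces surjectivity, giving the desired $\Phi$, together with the elliptic regularity (the harmonicity of $p_0$, the $H^{-1/2}$ traces of $\nu\cdot\sigma(w_0)$, etc.) needed to verify $\Phi \in D(\mathbf{A})$ per conditions (A.i)--(A.v).

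The main obstacle I anticipate is twofold. First, the coercivity of the static form is genuinely delicate because of the $-\beta^2 w_0$ indefinite shift in the thick-layer equation: one cannot get coercivity directly and must instead argue that the associated operator is a compact perturbation of a coercive one, so that closed range follows from the Fredholm alternative rather than from Lax–Milgram outright — and crucially this step needs Lemma \ref{point} to rule out a nontrivial kernel. Second, the pressure term demands the same nonstandard treatment as in \cite{Ger}: one must show the variational solution actually produces an $L^2(\Omega_f)$ pressure with the harmonicity and boundary-trace properties of (A.ii), so that the abstract solution corresponds to a genuine solution of the PDE system and lands in $D(\mathbf{A})$. Once surjectivity and the domain membership are in hand, $i\beta$ lies in $\rho(\mathbf{A})$, so in particular $i\beta \notin \sigma_c(\mathbf{A})$, completing the proof of Lemma \ref{cont}; together with Lemmas \ref{point} and \ref{res} and Theorem \ref{eigen} this will yield $\sigma(\mathbf{A}) \cap i\mathbb{R} = \{0\}$ and, via the Arendt–Batty criterion (Theorem \ref{sta}), the strong stability claimed in Theorem \ref{main}.
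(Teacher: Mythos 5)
Your route is genuinely different from the paper's: you propose to solve the resolvent equation $(i\beta I-\mathbf{A})\Phi=\Psi$ directly, via a mixed variational formulation plus a Fredholm alternative, thereby showing $i\beta\in\rho(\mathbf{A})$ outright. The paper instead argues by contradiction through the approximate spectrum: it takes a normalized sequence $\Phi_n$ with $(i\beta I-\mathbf{A})\Phi_n\to 0$, extracts decay of each component in turn (fluid dissipation $\Rightarrow u_{0n}\to0$ $\Rightarrow$ traces and $h_{0n}\to0$), uses a change of variable $z_n=w_{0n}+\frac{i}{\beta}D_s[\cdot]$ together with $H^2$ elliptic regularity for the clamped Lam\'e problem to get compactness of $\nu\cdot\sigma(z_n)$, and then applies Assumption \ref{assump} a second time to the limiting overdetermined problem \eqref{25} to force $\|\Phi_n\|_{\mathbf{H}}\to0$, a contradiction. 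Your approach, if completed, would be stronger and cleaner (it would use Assumption \ref{assump} only through Lemma \ref{point}), but it is not completed.

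The genuine gap is in your coercivity claim: you assert that the real part of the static sesquilinear form controls $\|\nabla u_0+\nabla^Tu_0\|^2+\|w_0\|^2_{\mathbf{H}^1(\Omega_s)}+\|\nabla_{\Gamma_s}h_0\|^2_{\Gamma_s}$ and that ``the terms with $i\beta$ coupling are skew.'' This cannot be correct as stated, because the paper's own identity \eqref{relation1} shows that
$\operatorname{Re}\langle(i\beta I-\mathbf{A})\Phi,\Phi\rangle_{\mathbf{H}}=\tfrac12\|\nabla u_0+\nabla^Tu_0\|^2$,
i.e.\ when the interface stress terms are made to cancel (which requires testing the elastic equations against the \emph{velocities} $h_1=i\beta h_0-f_2$, $w_1=i\beta w_0-f_4$), the real part retains no control whatsoever of the elastic energies — not even modulo compact terms. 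To recover elastic coercivity you must instead test against the \emph{displacements} $h_0,w_0$, but then the fluid boundary term $\langle(\nabla u_0+\nabla^Tu_0)\cdot\nu-p_0\nu,\cdot\rangle_{\Gamma_s}$ no longer cancels against its counterpart in the thin-layer equation: the two differ by the factor $i\beta$ coming from the constraint $u_0|_{\Gamma_s}=i\beta h_0-f_2$. The leftover is a $\beta$-dependent Stokes Dirichlet-to-Neumann contribution (nonlocal, order one on $\Gamma_s$, and involving the pressure) whose sign and compactness relative to the principal part are exactly what must be analyzed — and you give no argument for either. For real $\lambda>0$ (the Lumer--Phillips maximality step of \cite{Ger}) the two testings differ by a positive factor and both work at once; for purely imaginary $\lambda=i\beta$ they are rotated by $90^\circ$ against each other, which is precisely why the paper (like \cite{AGM},\cite{AA}) abandons the direct variational route here in favor of the sequential contradiction argument. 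Until you supply a rigorous treatment of that interface/DtN term (and of the inhomogeneous, $\beta$-dependent constraint space it forces on the trial functions), the Fredholm step — and hence the proof — is not established.
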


\begin{proof}
Our proof is based on a contradiction argument. To start with, asssume that 
 $i\beta $ ($\beta \neq 0$) is in the continuous spectrum $\sigma_c(\mathbf{A})$ of $\mathbf{A}.$ Since $\sigma_c(\mathbf{A}) \subset \sigma_{app}(\mathbf{A})$ (approximate spectrum) (see e.g., \cite[page 128]{F}) then there exist sequences%
\begin{equation}
\{\Phi _{n}\}=\left\{ \left[ 
\begin{array}{c}
u_{0n} \\ 
h_{0n} \\ 
h_{1n} \\ 
w_{0n} \\ 
w_{1n}%
\end{array}%
\right] \right\} \subseteq D(\mathbf{A});\text{ \ \ \ \ \ \ \ }\{(i\beta I-%
\mathbf{A)}\Phi _{n}\}\equiv \{\Phi^* _{n}\}=\left\{ \left[ 
\begin{array}{c}
u_{0n}^{\ast } \\ 
h_{0n}^{\ast } \\ 
h_{1n}^{\ast } \\ 
w_{0n}^{\ast } \\ 
w_{1n}^{\ast }%
\end{array}%
\right] \right\} \subseteq \mathbf{H}\text{\ ,}  \label{cont1}
\end{equation}%
which satisfy for $n=1,2,...,$%
\begin{equation}
\left\Vert \Phi _{n}\right\Vert _{\mathbf{H}}=1,\text{ and  \ \ \ }\left\Vert
(i\beta I-\mathbf{A)}\Phi _{n}\right\Vert _{\mathbf{H}}<\frac{1}{n}.
\label{cont2}
\end{equation}%
In PDE terms, each $\Phi _{n}$ solves the following static system, where again $p_{n}$ is given via \eqref{11.5}: 
\begin{equation}
\left\{ 
\begin{array}{l}
i\beta u_{0n}-\text{div}(\nabla u_{0n}+\nabla ^{T}u_{0n})+\nabla p_{n}=u_{0n}^{\ast
}\text{ \ \ \ in \ }\Omega _{f} \\ 
\text{div}(u_{0n})=0\text{ \ \ \ \ \ \ in \ }\Omega _{f} \\ 
u_{0n}|_{\Gamma _{f}}=0\text{ \ \ \ on \ }\Gamma _{f};%
\end{array}%
\right.   \label{s1-cont}
\end{equation}

\begin{equation}
\left\{ 
\begin{array}{c}
\begin{array}{l}
i\beta h_{0n}-h_{1n}=h_{0n}^{\ast }\text{ \ \ in }\Gamma _{s}%
\end{array}
\\ 
i\beta h_{1n}+[\nu \cdot (\nabla u_{0n}+\nabla ^{T}u_{0n})]|_{\Gamma
_{s}}-\Delta _{\Gamma _{s}}(h_{0n})-[\nu \cdot \sigma (w_{0n})]|_{\Gamma
_{s}}-p_{n} \nu =h_{1n}^{\ast }\text{ \ \ \ in\ \ }\Gamma _{s},\text{ \ \ }%
\end{array}%
\right.   \label{s2-cont}
\end{equation}%
\begin{equation}
\left\{ 
\begin{array}{l}
i\beta w_{0n}-w_{1n}=w_{0n}^{\ast }\text{ \ \ in \ \ \ }\Omega _{s} \\ 
i\beta w_{1n}-\text{div}\sigma (w_{0n})+w_{0n}=w_{1n}^{\ast }\text{ \ \ \ in \ }%
\Omega _{s} \\ 
w_{1n}|_{\Gamma _{s}}=h_{1n}=u_{0n}|_{\Gamma _{s}}\text{ \ \ \ on \ }\Gamma _{s}%
\end{array}%
\right.   \label{s3-cont}
\end{equation}%
We proceed here stepwise:
\vspace{0.3cm}

\noindent \textbf{\underline{Step I:} Estimates for the fluid variable $\{u_{0n}\}$ and thin-layer solution variables $\{h_{0n}, h_{1n}\}$}
\vspace{0.2cm}

\noindent To start, via the integration by parts, dissipativity relation \eqref{relation}, and \eqref{cont2} we have
 
\begin{equation}
\text{Re}\left\langle (\Phi^*_{n} ,\Phi_{n} \right\rangle _{\mathbf{H}}=\text{Re}\left\langle (i\beta I- \mathbf{A})\Phi_{n} ,\Phi_{n} \right\rangle _{\mathbf{H}}=\frac{1}{2}\left\Vert \nabla (u_{0n})+\nabla ^{T}(u_{0n})\right\Vert ^{2}=\mathcal{O}(\frac{1}{n}), \label{cont3}
\end{equation}%
and hence
\begin{equation}
||u_{0n}||_{H^1(\Omega_f)}\rightarrow 0. \label{cont3.5}
\end{equation} 
\vspace{0.1cm}

\noindent In turn, we have by \eqref{s3-cont}, \eqref{cont3} and Sobolev Trace Theory
\begin{equation}
u_{0n}|_{\Gamma_s}=w_{1n}|_{\Gamma_s}=h_{1n}=\mathcal{O}(\frac{1}{n}). \label{cont4}
\end{equation} 
Let $p_{n}=q_{n}+c_{n},$ where $q_n\in \hat{L}^{2}(\Omega _{f})$ (as defined in \eqref{lhat}), and $c_n$ is a constant. Then, by the Stokes Theory \cite{temam}, $\{u_{0n},q_n\}\in \bh^1(\Omega _{f})\times  \hat{L}^{2}(\Omega _{f})$ uniquely solve
\begin{equation}
\left\{ 
\begin{array}{l}
-\text{div}(\nabla u_{0n}+\nabla ^{T}u_{0n})+\nabla
q_{n}=-i\beta u_{0n}+u^{\ast }_{0n}\text{\ \ \ in \ }\Omega _{f} \\ 
\text{div}(u_{0n})=0\text{\ \ \ \ in \ }\Omega _{f} \\ 
u_{0n}|_{ \Gamma_{s}}=u_{0n}|_{ \Gamma_{s}}\text{ \ \ \ on \ } \Gamma_{s}\\
u_{0n}|_{\Gamma_{f}}=0\text{ \ \ \ on \ } \Gamma_{f},%
\end{array}%
\right.   \label{cont5}
\end{equation}
and so the following estimate holds:
\begin{equation}
||q_n||_{L^2(\Omega_f)}\leq C[||\Phi^*_{n}||_{\mathbf{H}}+||u_{0n}||_{H^1(\Omega_f)}]=\mathcal{O}(\frac{1}{n}). \label{cont6}
\end{equation}
Subsequently, an energy method yields that
\begin{equation}
\{\nu \cdot (\nabla u_{0n}+\nabla ^{T}u_{0n})-q_n \cdot \nu \}\in H^{-1/2}(\partial \Omega_f)
\end{equation}
with 
\begin{equation}
||\nu \cdot (\nabla u_{0n}+\nabla ^{T}u_{0n})-q_n \cdot \nu||_{H^{-1/2}(\partial \Omega_f)}\leq C[||u_{0n}^{\ast
}-i\beta u_{0n}||_{\Omega_f}+||u_{0n}||_{H^1(\Omega_f)}]=\mathcal{O}(\frac{1}{n}), \label{cont7}
\end{equation}
after using \eqref{cont2} and \eqref{cont3}. Moreover, since by the thin-layer resolvent relation in \eqref{s2-cont}$_1$ and the boundary condition \eqref{s3-cont}$_3$ 
\begin{equation}
h_{0n}=-\frac{i}{\beta}h_{1n}-\frac{i}{\beta}h_{0n}^{\ast}=-\frac{i}{\beta}u_{0n}-\frac{i}{\beta}h_{0n}^{\ast}.
\end{equation}
Using again \eqref{cont2} and \eqref{cont3}, we get 
\begin{equation}
||h_{0n}||_{H^{1/2}(\Gamma_s)}=\mathcal{O}(\frac{1}{n}). \label{cont8}
\end{equation}
\vspace{0.1cm}

\noindent \textbf{\underline{Step II: }Estimate for the term $\{\nu \cdot \sigma(w_{0n})\}$}
\vspace{0.3cm}

\noindent We start by invoking the ``Dirichlet" map $%
D_{s}:L^{2}(\Gamma _{s})\rightarrow L^{2}(\Omega _{s})$\ via%
\begin{equation*}
D_{s}g=f\Longleftrightarrow \left\{ 
\begin{array}{c}
\text{div}~\sigma (f)=0\text{ \ in \ }\Omega _{s} \\ 
f|_{\Gamma _{s}}=g\text{ \ on \ }\Gamma _{s}.%
\end{array}%
\right. 
\end{equation*}%
We know by the Lax-Milgram Theorem%
\begin{equation}
D_{s}\in \mathcal{L}(H^{\frac{1}{2}}(\Gamma _{s}),H^{1}(\Omega _{s})).
\label{cont9}
\end{equation}%
Now, with the Dirichlet map $D_s$ in hand, we make the change of variable
\begin{equation}
z_{n}\equiv w_{0n}+\frac{i}{\beta}D_{s}[u_{n}|_{\Gamma _{s}}+w_{0n}^{\ast }|_{\Gamma _{s}}].  \label{cont10}
\end{equation}%
Considering the resolvent relations in \eqref{s3-cont}$_1$-\eqref{s3-cont}$_2,$ $z_n$ then solves the following boundary value problem (BVP): 
\begin{equation*}
-\beta ^{2}z_{n}-\text{div}\sigma (z_{n})=F_{\beta}\text{ \ \ in \ \ }\Omega _{s}
\end{equation*}
\begin{equation}
\hspace{2.2cm}z_{n}|_{\Gamma _{s}}=0\text{ \ \ \ on \ \ }\Gamma _{s} \label{g-a}
\end{equation}
where 
\begin{equation}
\hspace{1cm}F_{\beta}= w_{1n}^{\ast }+i\beta w_{0n}^{\ast }-w_{0n}-i\beta D_{s}[u_{n}|_{\Gamma _{s}}+w_{0n}^{\ast }|_{\Gamma _{s}}]\label{g-b}
\end{equation}
Since this BVP has homogeneous boundary data then we have the estimate-- see e.g., \cite[page 296, Theorem 6.3-6]{ciarlet}:
\begin{equation}
||z_n||_{H^2{(\Omega_s)}} \leq ||w_{1n}^{\ast }+i\beta w_{0n}^{\ast }-w_{0n}-i\beta D_{s}[u_{n}|_{\Gamma _{s}}+w_{0n}^{\ast }|_{\Gamma _{s}}]||_{\Omega_s}\leq C_{1,\beta}\label{g-c}
\end{equation}
after using \eqref{cont2}. Consequently, there is the trace mapping-- see, e.g. \cite{Necas},
\begin{equation}
||\frac{\partial z_n}{\partial \nu}||_{\Gamma_s}\leq C||z_n||_{H^2{(\Omega_s)}} \leq C_{2,\beta}\label{g-d}
\end{equation}
(again, after using \eqref{g-c}.) With this trace estimate in hand, we invoke the following known expression for $\{\sigma(z_n)\cdot \nu|_{\Gamma_s}\}$ in terms of the normal and tangential derivatives (see \cite[page 18, Proposition A.1]{GR}):
\begin{equation*}
\sigma(z_n)\cdot \nu=\lambda \Big[ \frac{\partial z_n}{\partial \nu}\cdot \nu+\frac{\partial z_n}{\partial \tau}\cdot \tau+\frac{\partial z_n}{\partial e}\cdot e \Big]\nu+2\mu \frac{\partial z_n}{\partial \nu}
\end{equation*}
\begin{equation}
\hspace{2cm}+\mu  \Big[ \frac{\partial z_n}{\partial \tau}\cdot \nu-\frac{\partial z_n}{\partial \nu}\cdot \tau \Big] \tau+\mu\Big[ \frac{\partial z_n}{\partial e}\cdot \nu-\frac{\partial z_n}{\partial \nu}\cdot e\Big]e \label{g-e}
\end{equation}
Here, unit (tangent) vectors $\{e,\tau\}$ and $\nu$ constitute an orthonormal system on $\mathbb{R}^3.$ Since $z_n=0$ on $\Gamma_s$ then \eqref{g-e} is simplified to 
\begin{equation}
\sigma(z_n)\cdot \nu=\lambda ( \frac{\partial z_n}{\partial \nu}\cdot \nu)+2\mu \frac{\partial z_n}{\partial \nu}-\mu(\frac{\partial z_n}{\partial \nu}\cdot \tau) \tau-\mu(\frac{\partial z_n}{\partial \nu}\cdot e) e. \label{g-f}
\end{equation}
Applying the estimate \eqref{g-d} to the RHS of \eqref{g-f} now yields
\begin{equation}
||\sigma(z_n)\cdot \nu ||_{\Gamma_s} \leq C_{3,\beta}. \label{g-g}
\end{equation}

\noindent Moreover, an integration by parts yields the inference that
\begin{equation*}
\nu \cdot \sigma(D_s(\cdot)) \in \mathcal{L}(H^{1/2}(\Gamma_s),H^{-1/2}(\Gamma_s)).
\end{equation*}
Combining this boundedness with \eqref{cont2}, and \eqref{cont3} we have
\begin{equation}
||\nu \cdot \sigma(D_{s}[u_{n}|_{\Gamma _{s}}+w_{0n}^{\ast }|_{\Gamma _{s}}])||_{H^{-1/2}(\Gamma_s)} =\mathcal{O}(\frac{1}{n}). \label{g-h}
\end{equation}
Applying now \eqref{g-g}-\eqref{g-h} to the relation \eqref{cont10}, we get
\begin{equation}
||\nu \cdot \sigma(w_{0n})||_{H^{-1/2}(\Gamma_s)} \leq C. \label{21}
\end{equation}
\vspace{0.3cm}

\noindent \textbf{\underline{Step III: }Estimate for the term $\{c_n\}$}
\vspace{0.3cm}

\noindent At this step, we recall the definition of the pressure term $p_n=q_n+c_n,$ and read off the equation \eqref{s2-cont}$_2$ to have
\begin{equation*}
||-c_n\nu||_{H^{-1}(\Gamma_s)} =||-i\beta h_{1n}+h_{1n}^{\ast }-[\nu \cdot (\nabla u_{0n}+\nabla ^{T}u_{0n})]|_{\Gamma
_{s}}+\Delta _{\Gamma _{s}}(h_{0n})+[\nu \cdot \sigma (w_0n)]|_{\Gamma
_{s}}+q_{n} \nu ||_{H^{-1}(\Gamma_s)},
\end{equation*}
whence we get via \eqref{cont2}, \eqref{cont6}, \eqref{cont7}, and \eqref{21}
\begin{equation}
|c_n| \leq C, \label{22}
\end{equation}
where again $C>0$ depends on $\lambda, \beta, \mu.$
\vspace{0.3cm}

\noindent \textbf{\underline{Step IV: }Estimate for $\{\nabla_{\Gamma_s}h_{0n}\}$}
\vspace{0.3cm}

\noindent We multiply both sides of the equation \eqref{s2-cont}$_2$ by $h_{0n},$ integrate in space, and then use the integration by parts to take
\begin{equation*}
||\nabla_{\Gamma_s}h_{0n}||^2_{\Gamma_s}=-i\beta \left\langle h_{1n},h_{0n}\right\rangle_{\Gamma_s}+ \left\langle h_{1n}^{\ast },h_{0n}\right\rangle_{\Gamma_s}- \left\langle[\nu \cdot (\nabla u_{0n}+\nabla ^{T}u_{0n})]|_{\Gamma
_{s}},h_{0n}\right\rangle_{\Gamma_s}
\end{equation*}
\begin{equation*}
\hspace{-2.5cm}+ \left\langle[\nu \cdot \sigma (w_{0n})]|_{\Gamma
_{s}},h_{0n}\right\rangle_{\Gamma_s}+ \left\langle p_{n} \nu,h_{0n}\right\rangle_{\Gamma_s}.
\end{equation*}
Invoking again \eqref{cont2}, \eqref{cont3}, \eqref{cont4}, \eqref{cont6}, \eqref{21}-\eqref{22}, and moreover using the resolvent relation \eqref{s2-cont}$_{1}$ for the second term of the RHS of the last equality, we have
\begin{equation}
||\nabla_{\Gamma_s}h_{0n}||^2_{\Gamma_s}=\mathcal{O}(\frac{1}{n}). \label{22.5}
\end{equation}
Now, collecting all the estimates obtained in \eqref{cont3.5}, \eqref{cont4}, \eqref{cont6}, and \eqref{22.5} we get the following convergences:
\begin{equation}
\left\{ 
\begin{array}{l}
u_{0n}\rightarrow 0~~in~~\bh^1(\Omega_f) \\ 
q_n\rightarrow 0~~in~~L^2(\Omega _{f}) \\ 
q_n|_{\Gamma_s}\rightarrow 0~~in~~H^{-1/2}(\Gamma_s)\\
w_{0n}|_{\Gamma_{s}}\rightarrow 0~~in~~\bh^{1/2}(\Gamma_s)\\
h_{0n}\rightarrow 0~~in~~\bh^{1}(\Gamma_s)\\
h_{1n}\rightarrow 0~~in~~\bh^{1/2}(\Gamma_s).
\end{array}%
\right.   \label{23}
\end{equation}
Moreover, from \eqref{g-g}, the sequence $\{\nu \cdot \sigma(z_n)\}$ has a weakly convergent subsequence (still denoted as itself) such that $\{\nu \cdot \sigma(z_n)\}$ converges (weakly) in $L^2(\Gamma_s) $ and since $L^2(\Gamma_s)\subset_{>} H^{-1}(\Gamma_s) $ is compact we have then $\{\nu \cdot \sigma(z_n)\}$ converges strongly in $H^{-1}(\Gamma_s). $ \\

\noindent Lastly, from \eqref{22}, we have that $\{c_n\}$ converges strongly to $c^*,$ say. If we recall the definition of $z_n$ given in \eqref{cont10} and invoke the resolvent relation \eqref{s2-cont}$_{2}$ together with the limits in \eqref{23}, we have 
\begin{equation}
lim_{n\rightarrow \infty} \nu \cdot\sigma(z_n)=-c^*\nu. \label{24}
\end{equation}

\noindent To finish our proof, we consider again the BVP given in \eqref{g-a}: Initially, the relations \eqref{cont2} and \eqref{cont10} provide for the weak convergence 
\begin{equation*}
 z_n \xrightarrow{w} z,~~(say)~~in~~\bh^1(\Omega_s).
\end{equation*}
Hence, if we pass to the limit in \eqref{g-a} when $n\rightarrow \infty,$ then we get that $z\in \bh^1(\Omega_s)$ satisfies the following problem:
\begin{equation*}
-\beta ^{2}\left\langle z,\Psi \right\rangle_{\Omega_s}+\left\langle \sigma (z), \epsilon(\Psi)\right\rangle_{\Omega_s}+\left\langle c^*\nu,\Psi\right\rangle_{\Omega_s}+\left\langle z, \Psi\right\rangle_{\Omega_s}=0,~~\forall \Psi \in \bh^1(\Omega_s).
\end{equation*}
That is, $\{-\beta^2, z\}$ solves the following overdetermined eigenvalue problem:
\begin{equation*}
-\beta ^{2}z-\text{div}\sigma (z)+z=0\text{ \ \ in \ \ }\Omega _{s} 
\end{equation*}
\begin{equation*}
\hspace{3cm}z=0\text{ \ \ on \ \ }\Gamma _{s} 
\end{equation*}
\begin{equation}
\hspace{3.5cm}\frac{\partial z}{\partial \nu}=-c^*\nu\text{ \ \ on \ \ }\Gamma _{s}. \label{25}
\end{equation}
Now, under Assumption \ref{assump}, the only solution to problem \eqref{25} is $z=0$ for $c^*=0.$ Then from \eqref{cont10}, \eqref{cont3.5}, and \eqref{cont2}, we have $w_{0n}\rightarrow 0.$ This convergence, and those in \eqref{23}, contradicts the assumption that
\begin{equation}
||\Phi_n||_{\mathbf{H}}=1.
\end{equation}
As a result, for any $\beta \neq 0,$ $i\beta \notin \sigma_{c}(\mathbf{A}),$ and this completes the proof of Lemma \ref{cont}.
\end{proof}
\\ \\
\noindent To proceed with the proof of Theorem \ref{main}, we recall by Theorem \ref{eigen} that zero is an eigenvalue for the generator $\mathbf{A}.$ We show in fact that  $\lambda=0$ is in the resolvent set of $\mathbf{A}|_{N^{\bot}}:$
\begin{lemma} 
\label{resolv} 
$\lambda=0$ is in the resolvent set $\rho(\mathbf{A}|_{N^{\bot}})$ of $\mathbf{A}|_{N^{\bot}}: D(\mathbf{A}|_{N^{\bot}})\rightarrow N^{\bot}.$ That is, $$\Big(\mathbf{A}|_{N^{\bot}}\Big)^{-1}\in \mathcal{L}(N^{\bot}).$$
\end{lemma}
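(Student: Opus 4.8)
The plan is to show that $\mathbf{A}|_{\mathbf{N}^{\bot}}:D(\mathbf{A})\cap\mathbf{N}^{\bot}\rightarrow\mathbf{N}^{\bot}$ is a bijection whose inverse is bounded, which is exactly the claim $0\in\rho(\mathbf{A}|_{\mathbf{N}^{\bot}})$. Two of the ingredients are immediate. By the Remark following Theorem \ref{well}, $\mathbf{N}^{\bot}$ is a closed, $e^{\mathbf{A}t}$-invariant subspace, so $\mathbf{A}|_{\mathbf{N}^{\bot}}$ is the generator of the restricted contraction semigroup on $\mathbf{N}^{\bot}$; in particular it is closed and densely defined, with $D(\mathbf{A}|_{\mathbf{N}^{\bot}})=D(\mathbf{A})\cap\mathbf{N}^{\bot}$ and $\mathbf{A}(D(\mathbf{A})\cap\mathbf{N}^{\bot})\subseteq\mathbf{N}^{\bot}$. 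It is also injective, since by Theorem \ref{eigen} one has $Null(\mathbf{A}|_{\mathbf{N}^{\bot}})=Null(\mathbf{A})\cap\mathbf{N}^{\bot}=\mathbf{N}\cap\mathbf{N}^{\bot}=\{0\}$. So the whole matter reduces to \emph{surjectivity}: for every $\mathbf{F}=[f_1,f_2,f_3,f_4,f_5]\in\mathbf{N}^{\bot}$ I must produce $\Phi=[u_0,h_0,h_1,w_0,w_1]\in D(\mathbf{A})\cap\mathbf{N}^{\bot}$ with $\mathbf{A}\Phi=\mathbf{F}$. Once that is done, $\mathbf{A}|_{\mathbf{N}^{\bot}}$ is a closed bijection onto the Banach space $\mathbf{N}^{\bot}$, so its inverse is a closed, everywhere-defined operator and hence bounded by the closed graph theorem.

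To construct $\Phi$, I would write $\mathbf{A}\Phi=\mathbf{F}$ componentwise, with $p_0=\mathcal{P}_1(u_0)+\mathcal{P}_2(h_0)+\mathcal{P}_3(w_0)$ as in \eqref{11.5}. The second and fourth rows give $h_1=f_2\in\bh^1(\Gamma_s)$ and $w_1=f_4\in\bh^1(\Omega_s)$ outright, and the velocity match $h_1=w_1|_{\Gamma_s}$ is automatic because $\mathbf{F}\in\mathbf{H}$ forces $f_2=f_4|_{\Gamma_s}$. The remaining triple $(u_0,h_0,w_0)$ then has to solve the static, coupled Stokes--thin-elastic--thick-elastic boundary value problem
\begin{equation*}
\left\{
\begin{array}{l}
-\text{div}(\nabla u_{0}+\nabla^{T}u_{0})+\nabla p_{0}=-f_{1},\quad \text{div}\,u_{0}=0\ \ \text{in}\ \Omega_{f},\quad u_{0}|_{\Gamma_{f}}=0,\quad u_{0}|_{\Gamma_{s}}=f_{2},\\[3pt]
\Delta_{\Gamma_{s}}(h_{0})+[\nu\cdot\sigma(w_{0})]|_{\Gamma_{s}}-[\nu\cdot(\nabla u_{0}+\nabla^{T}u_{0})]|_{\Gamma_{s}}+p_{0}\nu=f_{3}\ \ \text{on}\ \Gamma_{s},\\[3pt]
-\text{div}\,\sigma(w_{0})+w_{0}=-f_{5}\ \ \text{in}\ \Omega_{s},\quad w_{0}|_{\Gamma_{s}}=h_{0}.
\end{array}
\right.
\end{equation*}
I would solve this by the scheme used in \cite{Ger} for semigroup generation --- decouple the Stokes component to recover $u_0$ together with the pressure up to an additive constant, then obtain the elastic variables from a mixed Babuska-Brezzi variational formulation, recovering $p_0$ through the harmonic extensions $\mathcal{P}_i$ --- which is in fact the favorable, purely elliptic situation here: at $\lambda=0$ the thick operator $-\text{div}\,\sigma(\cdot)+I$ is coercive and the reduced elastic bilinear form $\mathbf{a}(\cdot,\cdot)$ appearing in the proof of Theorem \ref{eigen} is $\mathbf{S}$-elliptic, so Lax-Milgram disposes of the elastic subsystem once the fluid part is fixed. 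The one global compatibility condition is the standard one for a divergence-free Stokes field with prescribed boundary trace, $\int_{\Gamma_{s}}\nu\cdot f_{2}\,d\Gamma_{s}+\int_{\Gamma_{f}}\nu\cdot 0\,d\Gamma_{f}=0$, and this is \emph{precisely} the relation $\int_{\Gamma_{s}}\nu\cdot f_{2}\,d\Gamma_{s}=0$ that defines $\mathbf{N}^{\bot}$ in \eqref{comp}, so it is available. As in the proof of Theorem \ref{eigen}, elliptic regularity then places $(u_0,h_0,w_0)$ --- together with $h_1=f_2$, $w_1=f_4$ --- in $D(\mathbf{A})$.

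The step I expect to be the main obstacle is reconciling the \emph{one-dimensional indeterminacy} of this static problem with membership in $\mathbf{N}^{\bot}$. Its solution is unique only modulo $\mathbf{N}=Null(\mathbf{A})$ --- equivalently the additive constant $c_0$ in $p_0$ is not pinned down; writing $[h_0(1),w_0(1)]$ for the eigen-pair of Theorem \ref{eigen}, the shift $c_0\mapsto c_0+s$ changes the solution by a multiple of $[0,h_0(1),0,w_0(1),0]\in\mathbf{N}$ and leaves $\mathbf{A}\Phi$ unchanged, so the constructed pair $[h_0(c_0),w_0(c_0)]$ is an affine function of $c_0$. I would fix $c_0$ by imposing $\int_{\Gamma_{s}}\nu\cdot h_0\,d\Gamma_{s}=0$, which is exactly the condition placing $\Phi$ itself in $\mathbf{N}^{\bot}$. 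This determines $c_0$ uniquely, because $\int_{\Gamma_{s}}\nu\cdot h_0(1)\,d\Gamma_{s}\neq 0$ --- otherwise the nonzero vector $[0,h_0(1),0,w_0(1),0]$ would belong to $\mathbf{N}\cap\mathbf{N}^{\bot}=\{0\}$, a contradiction --- so $c_0\mapsto\int_{\Gamma_{s}}\nu\cdot h_0(c_0)\,d\Gamma_{s}$ is a nonconstant affine scalar function with a unique root. This yields the required $\Phi\in D(\mathbf{A})\cap\mathbf{N}^{\bot}$ solving $\mathbf{A}\Phi=\mathbf{F}$, and hence the lemma. If one wants an explicit bound on the inverse rather than invoking the closed graph theorem, it follows by combining the Stokes estimate for $u_0$ and the zero-mean part of $p_0$, the Lax-Milgram bound for $\mathbf{a}(\cdot,\cdot)$, and $|c_0|\le C\|\mathbf{F}\|_{\mathbf{H}}$ from the nondegenerate affine relation, giving $\|\Phi\|_{\mathbf{H}}\le C\|\mathbf{F}\|_{\mathbf{H}}$.
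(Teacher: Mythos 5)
Your proposal is correct and its skeleton coincides with the paper's: reduce the lemma to surjectivity of $\mathbf{A}$ on $D(\mathbf{A})\cap \mathbf{N}^{\bot}$, peel off $h_1=f_2$, $w_1=f_4$, solve the decoupled Stokes problem for $u_0$ and the zero-mean pressure $q$ (the paper, like you, uses Temam's theory; your explicit remark that the Stokes compatibility condition $\int_{\Gamma_s}\nu\cdot f_2\,d\Gamma_s=0$ is exactly the defining relation of $\mathbf{N}^{\bot}$ is a point the paper leaves implicit), then solve the coupled thin--thick elastic subsystem together with the unknown pressure constant, and finish with elliptic recovery of $D(\mathbf{A})$ membership plus either explicit estimates or the closed graph theorem. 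Where you genuinely diverge is in how the constant $c_0$ is pinned down. The paper treats $c_0^*$ as a Lagrange multiplier enforcing the constraint $\int_{\Gamma_s}\nu\cdot h_0\,d\Gamma_s=0$ and solves the resulting saddle-point system \eqref{star4} by Babuska--Brezzi, which requires verifying an inf-sup condition for $\mathbf{b}(\cdot,\cdot)$ via an auxiliary Laplace--Beltrami problem. You instead solve, for each fixed $c_0$, the coercive problem by Lax--Milgram, observe that the solution is affine in $c_0$ with direction equal to the zero-eigenpair $[h_0(1),w_0(1)]$ of Theorem \ref{eigen}, and select $c_0$ as the unique root of the scalar affine map $c_0\mapsto\int_{\Gamma_s}\nu\cdot h_0(c_0)\,d\Gamma_s$; nondegeneracy follows since $\int_{\Gamma_s}\nu\cdot h_0(1)\,d\Gamma_s=\mathbf{a}([h_0(1),w_0(1)],[h_0(1),w_0(1)])>0$ (equivalently, from $\mathbf{N}\cap\mathbf{N}^{\bot}=\{0\}$ as you argue). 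For a one-dimensional multiplier space these two routes are equivalent --- the inf-sup condition is precisely the nondegeneracy of your linear functional --- but your version is more elementary and makes transparent that the only obstruction to solving $\mathbf{A}\Phi=\mathbf{F}$ is the one-dimensional kernel, at the cost of not generalizing as cleanly to higher-dimensional constraint spaces. Both treatments share the same mild gloss: the constant component of the reconstructed pressure $p_0=\mathcal{P}_1(u_0)+\mathcal{P}_2(h_0)+\mathcal{P}_3(w_0)$ must a posteriori agree with the $c_0$ chosen in the construction, a consistency point neither argument spells out.
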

\begin{proof}
As before, we use the denotations $$\Phi =\left[
u_{0},h_{0},h_{1},w_{0},w_{1}\right] \in D(\mathbf{A})\cap N^{\bot},~~~\Phi^* =\left[
u^*_{0},h^*_{0},h^*_{1},w^*_{0},w^*_{1}\right] \in N^{\bot},$$ and consider solving the following relation
\begin{equation}
\mathbf{A}\Phi=\Phi^*. \label{star}
\end{equation} 
Then, in PDE terms, this equation generates the following static system
\begin{equation}
\left\{ 
\begin{array}{l}
\text{div}(\nabla u_{0}+\nabla ^{T}u_{0})-\nabla q=u^*_0\text{ \ \ \ in \ }\Omega _{f} \\ 
\text{div}(u_0)=0\\
h_1=h^*_0 \text{ \ \ \ in \ }\Gamma _{s}\\ 
-[\nu \cdot (\nabla u_{0}+\nabla ^{T}u_{0})]|_{\Gamma
_{s}}+\Delta _{\Gamma _{s}}(h_{0})+[\nu \cdot \sigma (w_0)]|_{\Gamma
_{s}}+(q+c_0^*) \nu =h^*_1\text{ \ \ \ in \ }\Gamma_{s}\\
w_1=w^*_0\text{ \ \ \ in \ }\Omega _{s}\\
\text{div}\sigma (w_{0})-w_{0}=w^*_1\text{ \ \ \ in \ }\Omega _{s}\\
u_0|_{\Gamma_f}=0,~~u_0|_{\Gamma_s}=h_1=w_1|_{\Gamma_s}
\end{array}%
\right.   \label{stars5}
\end{equation}
where $p=q+c^*_0$ is the associated pressure as described in $D(\mathbf{A}).$ Firstly, the fluid component $u_0$ of \eqref{stars5} and the pressure term $q$ can be recovered via the Stokes Theory (See \cite[pg 22, Theorem 2.4]{temam}) and the pair $\{u_0, q\}\in [\bh^{1}(\Omega _{f})\cap Null(\text{div})]\times \hat{L}^{2}(\Omega _{f})$ solves the following static problem
\begin{equation}
\left\{ 
\begin{array}{l}
-\text{div}(\nabla u_{0}+\nabla ^{T}u_{0})+\nabla
q=-u_{0}^{\ast }\text{\ \ \ in \ }\Omega _{f} \\ 
\text{div}(u_{0})=0\text{\ \ \ \ in \ }\Omega _{f} \\ 
u_{0}|_{\Gamma_s}=h^*_0,%
\end{array}%
\right.   \label{res-2}
\end{equation}
with the estimate
\begin{equation}
||\nu \cdot (\nabla u_{0}+\nabla ^{T}u_{0})-q\nu||_{H^{-1/2}(\Gamma_s)}+||\nabla u_{0}+\nabla ^{T}u_{0}||_{\Omega_f}+||q||_{\Omega_f} \leq C||\Phi^*||_{\mathbf{H}}.  
 \label{res-3}
\end{equation}
Now, with the associated pressure $p=q+c^*_0,$ the constant component $c^*_0$ is to be determined. We turn our attention to the thick and thin elastic PDE component in \eqref{stars5}. Define the space%
\[
\textbf{S}=\left\{ (\varphi ,\psi )\in \bh^{1}(\Gamma _{s})\times \bh^{1}(\Omega
_{s}):\varphi =\psi |_{\Gamma _{s}}\right\}.
\]%
In order to generate a mixed variational formulation for the static ``thin"
and ``thick" solution variables in \eqref{stars5}, we respectively multiply (\ref{stars5})$_{4}$ and (\ref{stars5})$_{6}$ by functions $\varphi \in [\bh^{1}(\Gamma _{s})],$ and $\psi \in
[\bh^{1}(\Omega _{s})]$ in the space $\textbf{S}$, use Green's Theorem, and add the subsequent relations to get:%
\[
-\left\langle \nabla _{\Gamma _{s}}h_{0},\nabla _{\Gamma
_{s}}\varphi \right\rangle _{\Gamma _{s}}+\left\langle \nu \sigma(w_0)|_{\Gamma _{s}},\varphi \right\rangle _{\Gamma _{s}}+\left\langle c_0^*
\nu ,\varphi \right\rangle _{\Gamma _{s}} 
\]
\[
-\left\langle \nu \sigma(w_0)|_{\Gamma _{s}},\psi \right\rangle _{\Gamma _{s}}-\left\langle \sigma(w_0),\epsilon(\psi) \right\rangle _{\Omega _{s}}-\left\langle w_0,\psi \right\rangle _{\Omega _{s}}
\]
\begin{equation}
=\left\langle h_{1}^{\ast },\varphi \right\rangle _{\Gamma
_{s}}+\left\langle w_{1}^{\ast },\psi \right\rangle _{\Omega _{s}}+\left\langle \nu \cdot (\nabla u_{0}+\nabla ^{T}u_{0})|_{\Gamma
_{s}},\varphi \right\rangle _{\Gamma _{s}}-\left\langle q
\nu ,\varphi \right\rangle _{\Gamma _{s}}. 
\label{star3.5}
\end{equation}%
The last relation now gives us the following mixed variational
formulation in terms of the variables $h_{0}$ and $%
w_{0}:$ Namely,%

\begin{equation}
\left\{ 
\begin{array}{l}
\mathbf{a}([h_{0},w_{0}],[\varphi ,\psi ])+\mathbf{b}([\varphi ,\psi ],c^*_{0}) =\mathbf{F}([\varphi
,\psi ]),\text{ \ \ \ }\forall \text{ }[\varphi ,\psi ]\in \textbf{S} \\
\mathbf{b}([h_{0},w_{0}],r) =0,\text{ \ \ \ \ \ \ \ \ \ \ \ \ \ \ }\forall \text{ }r\in 
\mathbb{R}.
\end{array}%
\right.   \label{star4}
\end{equation}
Here, the bilinear forms $\mathbf{a}(.,.):\textbf{S}\times \textbf{S}\rightarrow 
\mathbb{R}
$ and $\mathbf{b}(.,.):\textbf{S}\times 
\mathbb{R}
\rightarrow 
\mathbb{R}
$ are respectively given as%
\begin{eqnarray*}
\mathbf{a}([\phi ,\xi ],[\widetilde{\phi },\widetilde{\xi }]) &=&\left\langle \nabla _{\Gamma _{s}}h_{0},\nabla _{\Gamma
_{s}}\varphi \right\rangle _{\Gamma _{s}}-\left\langle \nu \sigma(w_0)|_{\Gamma _{s}},\varphi \right\rangle _{\Gamma _{s}} \\
&&\hspace{-0.5cm}+\left\langle \nu \sigma(w_0)|_{\Gamma _{s}},\psi \right\rangle _{\Gamma _{s}}+\left\langle \sigma(w_0),\epsilon(\psi) \right\rangle _{\Omega _{s}}+\left\langle w_0,\psi \right\rangle _{\Omega _{s}},\end{eqnarray*}%
\[
\mathbf{b}([\widetilde{\phi },\widetilde{\xi }],r)=-r\int_{\Gamma_s} \nu \cdot \widetilde{\phi } d\Gamma_s,\]%
and the functional $\mathbf{F}(.)$ is defined as  
\begin{equation*}
\mathbf{F}([\widetilde{\phi },\widetilde{\xi }])=-\left\langle h_{1}^{\ast },\varphi \right\rangle _{\Gamma
_{s}}-\left\langle w_{1}^{\ast },\psi \right\rangle _{\Omega _{s}}-\left\langle \nu \cdot (\nabla u_{0}+\nabla ^{T}u_{0})|_{\Gamma
_{s}},\varphi \right\rangle _{\Gamma _{s}}+\left\langle q
\nu ,\varphi \right\rangle _{\Gamma _{s}}. 
\end{equation*}%
In order to solve this variational formulation we appeal to the Babuska-Brezzi Theorem (see Appendix, Theorem \ref{BB}). It is clear that the bilinear forms $\mathbf{a}(.,.)$ and $\mathbf{b}(.,.)$ are continuous, and moreover $\mathbf{a}(.,.)$ is $\textbf{S}$-elliptic. In order to conclude that the variational problem
(\ref{star4}) has a unique solution, we need to show that the bilinear form $%
\mathbf{b}(.,.)$ satisfies the ``inf-sup" condition given in Theorem \ref{BB}. For this, we consider the following
problem: \\

\noindent Given $r\in 
\mathbb{R}
$, let $\eta\in \bh^{1}(\Gamma _{s})$  satisfy%
\[
\Delta _{\Gamma _{s}}\eta=sgn(r)\nu \text{ \ \ \ in \ }\Gamma _{s}
\]%
It is easily seen that $\left\Vert \nabla _{\Gamma _{s}}\eta\right\Vert _{\Gamma
_{s}}\leq C\left\Vert \nu \right\Vert _{\Gamma _{s}}.$ Now, taking into account that $\gamma
: \bh^1(\Omega_s)\rightarrow \bh^{1/2}(\Gamma_s)$ is a surjective map, and so it has a continuous right inverse $\gamma^+(\eta),$ we have%
\begin{eqnarray*}
\sup_{\lbrack \theta ,\varsigma ]\in \textbf{S}}\frac{b([\theta ,\varsigma ],r)}{%
\left\Vert [\theta ,\varsigma ]\right\Vert _{\textbf{S}}} &\geq &\frac{b([\eta,\gamma^+(\eta)],r)}{%
\left\Vert [\eta,\gamma^+(\eta)]\right\Vert _{\textbf{S}}} \\
&=&\frac{-r\int\limits_{\Gamma _{s}}\nu \cdot \eta d\Gamma _{s}}{\left\Vert [\eta,\gamma^+(\eta)]\right\Vert _{\textbf{S}}} \\
&=&-rsgn(r)\frac{\int\limits_{\Gamma _{s}}\Delta _{\Gamma _{s}}\eta\cdot
\eta d\Gamma _{s}}{\left\Vert [\eta,\gamma^+(\eta)]\right\Vert _{\textbf{S}}} \\
&=&\left\vert r\right\vert \frac{\int\limits_{\Gamma _{s}}|\nabla _{\Gamma
_{s}}\eta|^{2}d\Gamma _{s}}{\left\Vert [\eta,\gamma^+(\eta)]\right\Vert _{\textbf{S}}} \\
&\geq&C\left\vert r\right\vert \frac{\int\limits_{\Gamma _{s}}|\nabla _{\Gamma
_{s}}\eta|^{2}d\Gamma _{s}}{\left\Vert \eta \right\Vert _{\bh^1(\Gamma_s)}}\\
&=&C\left\vert r\right\vert \left\Vert \eta\right\Vert _{\bh^{1}(\Gamma _{s})},
\end{eqnarray*}%
which yields that the inf-sup condition holds with the constant $\beta =C\left\Vert \eta\right\Vert
_{\bh^{1}(\Gamma _{s})}.$ Consequently, the existence and uniqueness of the solution $%
[h_{0},w_{0}]\in \textbf{S}$ and $c^*_{0}\in 
\mathbb{R}
$  to the mixed variational problem (\ref{star4}) follows from Theorem \ref{BB}, and satisfy
\begin{equation}
||[h_{0},w_{0}]||_{\textbf{S}}+|c^*_0|\leq C||\Phi^*||_{\mathbf{H}}. \label{star4.5}
\end{equation}
Subsequently, if we take $[\varphi ,\psi ]=[0,\psi]$ in \eqref{star4} where $\psi \in [D(\Omega_s)]^3,$ we infer that the obtained $w_0$ solves:
\begin{equation}
-\text{div}\sigma (w_{0})+w_{0}=-w^*_1\text{ \ \ \ in \ }\Omega _{s}. \label{star5}
\end{equation}
In turn, via an energy method, we have the estimate
\begin{equation}
||\nu \cdot \sigma(w_0))||_{H^{-1/2}(\Gamma_s)} \leq C ||\Phi^*_0||_{\mathbf{H}}. \label{star6}
\end{equation}
With this estimate in hand, $\{[h_{0},w_{0}],c^*_{0}\}$ solves
\[
\left\langle \nabla _{\Gamma _{s}}h_{0},\nabla _{\Gamma
_{s}}\varphi \right\rangle _{\Gamma _{s}}+\left\langle \sigma(w_0),\epsilon(\psi) \right\rangle _{\Omega _{s}}+\left\langle w_0,\psi \right\rangle _{\Omega _{s}}-\left\langle c^*_0
\nu ,\varphi \right\rangle _{\Gamma _{s}} \]
\begin{equation*}
=-\left\langle h_{1}^{\ast },\varphi \right\rangle _{\Gamma
_{s}}-\left\langle w_{1}^{\ast },\psi \right\rangle _{\Omega _{s}}-\left\langle \nu \cdot (\nabla u_{0}+\nabla ^{T}u_{0})|_{\Gamma
_{s}},\varphi \right\rangle _{\Gamma _{s}}+\left\langle q
\nu ,\varphi \right\rangle _{\Gamma _{s}}.
\end{equation*}%
An integration by parts and consideration of \eqref{star5} then yields
\[
-\Delta _{\Gamma _{s}}(h_{0})+[\nu \cdot (\nabla u_{0}+\nabla ^{T}u_{0})]|_{\Gamma
_{s}}-[\nu \cdot \sigma (w_0)]|_{\Gamma
_{s}}-p \nu =-h^*_1\text{ \ \ \ in \ }\Gamma_{s}.
\]
If we read off the last equation then we get
\begin{equation*}
||-\Delta _{\Gamma _{s}}(h_{0})+[\nu \cdot (\nabla u_{0}+\nabla ^{T}u_{0})]|_{\Gamma
_{s}}-[\nu \cdot \sigma (w_0)]|_{\Gamma
_{s}}-p \nu ||_{\Gamma_{s}}
\leq C||\Phi^*||_{\mathbf{H}}.
\end{equation*}
Finally, the pressure term $p$ can be reconstructed via the maps $(\mathcal{P}_{i})s,$ as defined in \eqref{11.5}, which then yields that $\Phi =\left[
u_{0},h_{0},h_{1},w_{0},w_{1}\right] \in D(\mathbf{A})\cap N^{\bot}$ indeed solves \eqref{star}. Hence $0 \in \rho(\mathbf{A}|_{N^{\bot}}).$
\end{proof}
\\

\noindent As a result, combining Theorem \ref{eigen}, Lemma \ref{point}, Lemma \ref{res}, Lemma \ref{cont} and Lemma \ref{resolv} finishes the proof of our main result Theorem \ref{main}.

\section{Appendix}

For the reader’s convenience, we provide the statements of the following theorems which are critically used in this manuscript.

\begin{theorem} \cite{A-B}
\label{sta} 
Let ${T(t)}_{t\geq 0}$ be a bounded $C_{0}$-semigroup on a reflexive Banach space X, with generator $\mathbf{A}$. Assume that $\sigma_p(\mathbf{A})\cap i\mathbb{R}=\emptyset$, where $\sigma_p(\mathbf{A})$ is the point spectrum of $\mathbf{A}$. If $\sigma(\mathbf{A})\cap i\mathbb{R}$ is countable then ${T(t)}_{t\geq 0}$ is strongly stable.
\end{theorem}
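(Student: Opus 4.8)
The plan is to establish the Arendt--Batty criterion along classical lines: renorm so that the semigroup is a contraction, pass to the quotient by the stable subspace, and show that quotient is trivial by a Tauberian argument on the imaginary axis. First I would equip $X$ with the equivalent norm $\|x\|'=\sup_{t\ge0}\|T(t)x\|$, which makes $(T(t))_{t\ge0}$ a contraction semigroup and changes neither $\sigma(\mathbf{A})$, nor $\sigma_p(\mathbf{A})$, nor the meaning of strong convergence. (In the present application $(T(t))$ is already a contraction semigroup, and the Lemmas above in fact yield the sharper statement $\sigma(\mathbf{A})\cap i\mathbb{R}=\emptyset$ on $\mathbf{N}^{\bot}$ together with $\sigma_p(\mathbf{A}^*)\cap i\mathbb{R}=\emptyset$, since $\overline{\lambda}\in\sigma_p(\mathbf{A}^*)$ whenever $\lambda\in\sigma_r(\mathbf{A})$ and $\sigma_r(\mathbf{A})\cap i\mathbb{R}=\emptyset$ by Lemma \ref{res}.) Now set $X_0=\{x\in X:\ \|T(t)x\|\to0\ \text{as}\ t\to\infty\}$. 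This is a closed, $(T(t))$-invariant subspace, and since $\|T(t)x\|$ is nonincreasing the limit $\lim_{t\to\infty}\|T(t)x\|$ exists for every $x$. The goal is $X_0=X$; suppose instead $X_0\neq X$.

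I would form the quotient $Y=X/X_0$ with the induced contraction semigroup $(S(t))_{t\ge0}$ and generator $\mathbf{B}$. Using invariance of $X_0$ one checks: $Y\neq\{0\}$; $\sigma(\mathbf{B})\cap i\mathbb{R}\subseteq\sigma(\mathbf{A})\cap i\mathbb{R}$, hence countable; $\sigma_p(\mathbf{B}^*)\cap i\mathbb{R}\subseteq\sigma_p(\mathbf{A}^*)\cap i\mathbb{R}=\emptyset$ (here $Y^*\cong X_0^{\bot}$ and $\mathbf{B}^*$ is the part of $\mathbf{A}^*$ acting there); and, crucially, the stable subspace of $(S(t))$ is $\{0\}$, i.e. no nonzero orbit of $(S(t))$ tends to zero. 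The last point is the only slightly nonobvious one: if $\mathrm{dist}(T(t)x,X_0)\to0$, then choosing $z_t\in X_0$ nearly realizing that distance and using invariance of $X_0$ together with boundedness of $(T(t))$ gives $\limsup_{r\to\infty}\|T(r)x\|\le\varepsilon$ for every $\varepsilon>0$, whence $x\in X_0$.

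The heart of the proof is a Tauberian step on $Y$. Fix $y\in D(\mathbf{B})$; then $t\mapsto S(t)y$ is bounded and uniformly continuous, and for $\mathrm{Re}\,\lambda>0$ its vector-valued Laplace transform is $R(\lambda,\mathbf{B})y=\int_0^{\infty}e^{-\lambda t}S(t)y\,dt$, which extends holomorphically to $\rho(\mathbf{B})\supseteq i\mathbb{R}\setminus iE$ with $E=\{s\in\mathbb{R}:is\in\sigma(\mathbf{B})\}$ countable, the contraction bound $\|R(\lambda,\mathbf{B})\|\le(\mathrm{Re}\,\lambda)^{-1}$ controlling it near the axis. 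I would then invoke the vector-valued Tauberian theorem of Ingham--Karamata type used by Arendt and Batty: a bounded, uniformly continuous orbit whose Laplace transform extends holomorphically across $i\mathbb{R}$ off a countable exceptional set, at which the contractivity estimate leaves open only the singular behaviour produced by an eigenvector of $\mathbf{B}^*$ at the corresponding imaginary point, must satisfy $S(t)y\to0$. Since $\sigma_p(\mathbf{B}^*)\cap i\mathbb{R}=\emptyset$ there are no such eigenvectors, the exceptional behaviour is removable, so $S(t)y\to0$ for all $y\in D(\mathbf{B})$, and by density and contractivity for all $y\in Y$ --- contradicting $Y\neq\{0\}$. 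Hence $X_0=X$ and $(T(t))$ is strongly stable. (An alternative packaging, when $\sigma(\mathbf{A})\cap i\mathbb{R}$ is handled isolated point by isolated point: at an isolated imaginary spectral point $i\eta$ the Riesz projection is nonzero, the contractive group it generates has generator with spectrum $\{i\eta\}$ and hence acts as $e^{i\eta t}$, exhibiting $i\eta$ as an eigenvalue of $\mathbf{B}$ --- or, when $i\eta$ is not isolated in $\sigma(\mathbf{B})$, of $\mathbf{B}^*$ via the residual spectrum --- contradicting the hypothesis.)

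The step I expect to be the main obstacle is the Tauberian estimate: it is genuinely quantitative complex analysis --- a contour-integration / Fej\'er-kernel argument controlling a vector-valued holomorphic function near the boundary of its domain of holomorphy --- rather than a soft functional-analytic argument, and it is exactly here that both hypotheses enter essentially: countability of $\sigma(\mathbf{A})\cap i\mathbb{R}$ to cover the axis by the good set plus a meagre exceptional set treated point by point, and absence of imaginary eigenvalues of $\mathbf{A}^*$ (equivalently, in the reflexive setting, absence of imaginary point and residual spectrum of $\mathbf{A}$) to kill the residual singular behaviour at each exceptional point. By contrast, the renorming, the quotient construction, the identification of $X_0$, and the verification that $\mathbf{B}$ inherits the hypotheses are all routine.
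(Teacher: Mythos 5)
This statement is not proved in the paper at all: Theorem \ref{sta} is the Arendt--Batty theorem, quoted verbatim from \cite{A-B} in the Appendix ``for the reader's convenience,'' so there is no internal proof to compare yours against. Judged on its own, your outline follows the classical Arendt--Lyubich--Batty--V\~u route (renorm to a contraction, quotient by the stable subspace $X_0$, Laplace-transform/Tauberian argument on the quotient), and the soft steps --- renorming, closedness and invariance of $X_0$, triviality of the stable subspace of the quotient, and the spectral inclusions for $\mathbf{B}$ --- are correctly identified and routinely checkable.

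There are, however, two genuine gaps. First, the entire analytic content of the theorem sits in the ``Tauberian step,'' and you do not prove it: you invoke ``the vector-valued Tauberian theorem of Ingham--Karamata type used by Arendt and Batty,'' which is essentially citing the result you are meant to establish; as you yourself note, this is the main obstacle, and a proof that defers it wholesale is not a proof of the theorem. Second, and more concretely fixable: the Tauberian machinery requires $\sigma_p(\mathbf{A}^*)\cap i\mathbb{R}=\emptyset$ (so that the boundedness $\|\alpha R(\alpha+i\eta,\mathbf{B})\|\le 1$ can be upgraded to $\alpha R(\alpha+i\eta,\mathbf{B})y\to 0$ at each exceptional point $i\eta$), whereas the theorem only assumes $\sigma_p(\mathbf{A})\cap i\mathbb{R}=\emptyset$. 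The passage from one to the other is exactly where reflexivity enters, via a mean-ergodic argument: if $\mathbf{A}^*x^*=i\eta x^*$ with $x^*\neq 0$, the Ces\`aro means $\frac1t\int_0^t e^{-i\eta s}T(s)x\,ds$ are bounded and pair with $x^*$ to give the constant $\langle x^*,x\rangle$, so a weak cluster point (which exists by reflexivity) is a nonzero eigenvector of $\mathbf{A}$ for $i\eta$, contradicting the hypothesis. Your sketch never uses reflexivity for this purpose; instead the parenthetical claims the equivalence holds given ``absence of imaginary point \emph{and residual} spectrum of $\mathbf{A}$,'' but absence of residual spectrum is not a hypothesis of the theorem (it happens to hold in the paper's application by Lemma \ref{res}, which is irrelevant to proving the abstract statement). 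Without this lemma your argument proves a weaker theorem than the one stated.
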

In order to establish our strong stability result, one of the key tools that we use to show that zero is in the resolvent set of the operator $\{\mathbf{A}|_{N^{\bot}}\}$ is the Babuska-Brezzi Theorem whose statement here is recalled for the reader's convenience:
\begin{theorem}
\cite{kesevan}\label{BB} (Babuska-Brezzi) Let $\Sigma ,$ $V$ be Hilbert spaces and $a:\Sigma \times
\Sigma \rightarrow 
\mathbb{R}
,$ $b:\Sigma \times V\rightarrow 
\mathbb{R}
$ bilinear forms which are continuous. Let%
\[
Z=\left\{ \sigma \in \Sigma :b(\sigma ,v)=0,\text{ \ for every }v\in
V\right\} .
\]%
Assume that $a(\cdot ,\cdot )$ is $Z$-elliptic, i.e., there exists a
constant $\alpha >0$ such that 
\[
a(\sigma ,\sigma )\geq \alpha \left\Vert \sigma \right\Vert _{\Sigma }^{2},%
\text{ \ \ for every }\alpha \in Z.\text{ }
\]%
Assume further that there exists a constant $\beta >0$ such that%
\[
\sup_{\tau \in \Sigma }\frac{b(\tau ,v)}{\left\Vert \tau \right\Vert
_{\Sigma }}\geq \beta \left\Vert v\right\Vert _{V},\text{ \ \ for every }%
v\in V.
\]%
Then if $\kappa \in \Sigma $ and $l\in V,$ there exists a unique pair $%
(\sigma ,u)\in \Sigma \times V$ such that%
\begin{equation*}
\left\{ 
\begin{array}{l}
a(\sigma ,\tau )+b(\tau ,u)=(\kappa ,\tau )\text{ \ \ \ }\forall \text{ }%
\tau \in \Sigma \\

b(\sigma ,v)=(l,v)\text{ \ \ \ \ \ \ \ \ \ \ \ \ \ \ \ \ }\forall \text{ }v\in V.
\end{array}
\right. \label{12.5}
\end{equation*}
\end{theorem}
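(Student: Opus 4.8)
The plan is to recast the mixed variational system in operator form and then combine the Lax--Milgram lemma on the kernel $Z$ with a closed-range argument coming from the inf-sup condition. Using the Riesz representation theorem, introduce the bounded linear operators $A:\Sigma\to\Sigma$ and $B:\Sigma\to V$ determined by $\langle A\sigma,\tau\rangle_{\Sigma}=a(\sigma,\tau)$ and $\langle B\tau,v\rangle_{V}=b(\tau,v)$, and let $B^{*}:V\to\Sigma$ be the Hilbert-space adjoint of $B$, so that $\langle B^{*}v,\tau\rangle_{\Sigma}=b(\tau,v)$. Then the system to be solved reads $A\sigma+B^{*}u=\kappa$ in $\Sigma$ and $B\sigma=l$ in $V$, and one notes $Z=\ker B$. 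The first observation is that the inf-sup hypothesis is exactly the statement $\|B^{*}v\|_{\Sigma}\ge\beta\|v\|_{V}$ for all $v\in V$ (take the supremum over $\tau$ of $\langle B^{*}v,\tau\rangle_{\Sigma}/\|\tau\|_{\Sigma}$ and use that one may change the sign of $\tau$). Consequently $B^{*}$ is injective with closed range, and $BB^{*}:V\to V$ is coercive since $\langle BB^{*}v,v\rangle_{V}=\|B^{*}v\|_{\Sigma}^{2}\ge\beta^{2}\|v\|_{V}^{2}$; hence $BB^{*}$ is boundedly invertible, $B$ is surjective, and moreover $Z^{\perp}=\overline{\operatorname{Range}(B^{*})}=\operatorname{Range}(B^{*})$.

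For existence, first pick the particular element $\sigma_{0}:=B^{*}(BB^{*})^{-1}l$, which satisfies $B\sigma_{0}=l$. Next I would look for $\sigma$ of the form $\sigma=\sigma_{0}+\sigma_{Z}$ with $\sigma_{Z}\in Z$, so that $B\sigma=l$ holds automatically; testing the first equation against $\tau\in Z$ annihilates the $B^{*}u$ term, because $\langle B^{*}u,\tau\rangle_{\Sigma}=\langle u,B\tau\rangle_{V}=0$, so it reduces to finding $\sigma_{Z}\in Z$ with $a(\sigma_{Z},\tau)=\langle\kappa,\tau\rangle_{\Sigma}-a(\sigma_{0},\tau)$ for every $\tau\in Z$. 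Since $Z$ is a closed subspace of $\Sigma$, hence itself a Hilbert space, and $a(\cdot,\cdot)$ is continuous and $Z$-elliptic, the Lax--Milgram lemma provides a unique such $\sigma_{Z}$; set $\sigma:=\sigma_{0}+\sigma_{Z}$, so that $a(\sigma,\tau)=\langle\kappa,\tau\rangle_{\Sigma}$ for all $\tau\in Z$. Finally, to recover $u$, observe that $\kappa-A\sigma$ is orthogonal to $Z$: for $\tau\in Z$ one has $\langle\kappa-A\sigma,\tau\rangle_{\Sigma}=\langle\kappa,\tau\rangle_{\Sigma}-a(\sigma,\tau)=0$ by construction. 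Hence $\kappa-A\sigma\in Z^{\perp}=\operatorname{Range}(B^{*})$, so there is $u\in V$ with $B^{*}u=\kappa-A\sigma$, which is precisely $a(\sigma,\tau)+b(\tau,u)=\langle\kappa,\tau\rangle_{\Sigma}$ for all $\tau\in\Sigma$. Thus $(\sigma,u)$ solves the system.

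For uniqueness, suppose $(\sigma,u)$ solves the homogeneous system (i.e.\ $\kappa=0$, $l=0$). From $B\sigma=0$ we get $\sigma\in Z$; testing $a(\sigma,\tau)+b(\tau,u)=0$ with $\tau=\sigma\in Z$ gives $b(\sigma,u)=\langle u,B\sigma\rangle_{V}=0$, hence $a(\sigma,\sigma)=0$, and $Z$-ellipticity forces $\sigma=0$. Then $b(\tau,u)=0$ for all $\tau\in\Sigma$, i.e.\ $B^{*}u=0$, and injectivity of $B^{*}$ (from the inf-sup bound) yields $u=0$.

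The crux of the argument is the passage from the inf-sup inequality to the closed-range and surjectivity facts for $B$ and $B^{*}$ — in particular the identification $Z^{\perp}=\operatorname{Range}(B^{*})$ — since everything else is a routine combination of the splitting $\sigma=\sigma_{0}+\sigma_{Z}$ with the Lax--Milgram lemma on $Z$. (If desired, the same operator identities yield the a priori bound $\|\sigma\|_{\Sigma}+\|u\|_{V}\le C(\|\kappa\|_{\Sigma}+\|l\|_{V})$ with $C$ depending only on $\alpha$, $\beta$, and the continuity constant of $a$, but this is not needed for the statement as worded.)
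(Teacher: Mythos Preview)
Your proof is correct and follows the standard operator-theoretic route to the Babuska--Brezzi theorem: recast the mixed system via Riesz representatives as $A\sigma+B^{*}u=\kappa$, $B\sigma=l$; use the inf-sup condition to get injectivity and closed range of $B^{*}$ (hence surjectivity of $B$ and the identification $Z^{\perp}=\operatorname{Range}(B^{*})$); lift the constraint by a particular solution $\sigma_{0}$; apply Lax--Milgram on the closed subspace $Z$; and finally recover $u$ from $\kappa-A\sigma\in Z^{\perp}$. The uniqueness argument is the usual one.

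There is nothing to compare against, however: the paper does not supply a proof of this theorem. It is quoted in the Appendix purely for the reader's convenience, with a citation to \cite{kesevan}, and is invoked as a black box in the proof of Lemma~\ref{resolv}. Your argument is essentially the textbook proof one would find in that reference (or in Brezzi's original work), so in that sense it is consistent with what the paper relies on.
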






\section{Acknowledgement}

\noindent The author Pelin G. Geredeli would like to thank the National Science Foundation, and acknowledge her partial funding from NSF Grant DMS-2348312.

\end{document}